\normalfont\fontsize{12}{14}\bfseries}{\thesection}{1em}{}
\normalfont\fontsize{12}{14}\bfseries}{\thesubsection}{1em}{}
\normalfont\fontsize{12}{14}\bfseries}{\thesubsubsection}{1em}{}
\def\titlerunning#1{\gdef\titrun{#1}}
\def\author#1{\gdef\autrun{\def\and{\unskip, }#1}\gdef\@author{#1}}
\def\address#1{{\def\and{\\\hspace*{18pt}}\renewcommand{\thefootnote}{}%
\footnote {#1}}%
\markboth{\autrun}{\titrun}}
\def\email#1{e-mail: #1}
\def\keywords#1{\par\medskip
\noindent\textbf{Keywords.} #1}
\newtheorem{thm}{Theorem}[section]
\newtheorem{cor}[thm]{Corollary}
\newtheorem{prop}[thm]{Proposition}
\theoremstyle{definition}
\newtheorem{defin}[thm]{Definition}
\newtheorem{rem}[thm]{Remark}
\newtheorem{exa}[thm]{Example}
\numberwithin{equation}{section}
\crefname{thm}{Theorem}{Theorems}
\crefname{cor}{Corollary}{Corollaries}
\crefname{lem}{Lemma}{Lemmas}
\crefname{defin}{Definition}{Definitions}
\crefname{exa}{Example}{Examples}
\crefname{rem}{Remark}{Remarks}
\crefname{equation}{Equation}{Equations}
\begin{document}

\titlerunning{}

\title{On Kodaira dimension and scalar curvature in almost Hermitian geometry}

\author{Xianchao Zhou}

\date{\today}

\maketitle

\address{School of Mathematical Sciences, Zhejiang University of Technology, Hangzhou 310023, China\\
\email{zhouxianch07@zjut.edu.cn}}


\begin{abstract}
In this paper, we investigate Riemannian curvature constraints on the Kodaira dimension of compact almost Hermitian manifolds. Specifically, for a compact almost Hermitian manifold $(M, J, g)$ in the Gray-Hervella class $\mathcal{W}_2\oplus\mathcal{W}_3\oplus \mathcal{W}_4$ with nonnegative Riemannian scalar curvature, we prove that its Kodaira dimension must satisfy $\kappa(M, J)=-\infty$; or $\kappa(M, J)=0$, in which case $(M,J,g)$ is a K\"{a}hler  Calabi-Yau manifold. The same conclusions also hold for compact Hermitian manifolds with an assumption of nonnegative mixed scalar curvature. As an important example, we study the twistor geometry of a compact anti-self-dual 4-manifold. In particular, for the twistor space with the Eells-Salamon almost complex structure, we show that the Kodaira dimension is zero.
\end{abstract}

\keywords{Riemannian scalar curvature, $J$-scalar curvature, Kodaira dimension, Chern scalar curvature, Gauduchon metric}


\section{Introduction}

On a compact almost Hermitian manifold, several significant scalar curvatures arise, such as Riemannian scalar curvature, $J$-scalar curvature and Chern scalar curvature. The signs of these scalar curvatures or the corresponding total scalar curvatures are closely related to the underlying manifold's topology and complex algebraic geometry.

In K\"{a}hler geometry, the complex structure is parallel with respect to the Levi-Civita connection, then these scalar curvatures are identical within a factor of two. Hence on a K\"ahler manifold, the complex geometry is very compatible with the underlying Riemannian geometry. Yau \cite{Yau1} proved that the Kodaira dimension of a compact  K\"{a}hler manifold with positive total
scalar curvature must be $-\infty$. Furthermore, Yau \cite{Yau1} also proved that a
compact K\"{a}hler surface is uniruled if and only if it admits a K\"{a}hler metric with positive total scalar curvature. On the other
hand,  for compact K\"{a}hler surfaces, LeBrun \cite{LeB} obtained an interesting relation between the Kodaira dimension and the Yamabe invariant. Heier-Wong \cite{HeW1} studied the total scalar curvatures of K\"{a}hler metrics on projective manifold. In particular, Heier-Wong \cite{HeW1} showed that a projective manifold is uniruled if it admits a K\"{a}hler metric with positive total scalar curvature.

For a Hermitian non-K\"{a}hler manifold, the complex structure is not parallel with respect to the Levi-Civita connection. One usually chooses the Chern connection instead of the Levi-Civita connection and hence use the Chern scalar curvature. Gauduchon \cite{Gau2} proved that if a compact complex manifold admits a Gauduchon metric with positive total Chern scalar curvature, then its Kodaira dimension is $-\infty$. Chiose-R\u{a}sdeaconu-\c{S}uvaina \cite{CRS} successfully extended Yau's result to the non-K\"ahler case and showed that
a compact Moishezon manifold is uniruled if and only if it admits a Gauduchon metric with positive total Chern scalar curvature.
In recent years, Yang \cite{Yang2} systematically investigated the relations among the total Chern scalar curvatures of Gauduchon metrics, Kodaira dimension and the pseudo-effectiveness of canonical line bundles. Later, from the perspective of Riemannian geometry, Yang \cite{Yang3} studied the Kodaira dimension of a compact Hermitian manifold with nonnegative Riemannian scalar curvature.

Recently, Chen-Zhang \cite{ChZ1, ChZ2} have generalized the notion of Kodaira dimension to compact almost complex manifold using the growth rate of the pseudoholomorphic sections of the corresponding canonical complex line bundle. We refer readers to section 3 for details. Motivated by these significant developments, our first main result extend Yang's work \cite{Yang3} to a broader class of manifolds within the framework of almost Hermitian geometry.

\begin{thm}\textup{(See Theorem 4.1).}
Let $(M,J,g)$ be a compact almost Hermitian manifold of complex dimension $n\geq 3$. If
$(M,J,g)\in \mathcal{W}_2\oplus\mathcal{W}_3\oplus \mathcal{W}_4$ and its Riemannian scalar curvature $s\geq0$, then either

(1) $\kappa(M, J)=-\infty$; or

(2) $\kappa(M, J)=0$, and $(M,J,g)$ is a K\"{a}hler Calabi-Yau manifold.

\end{thm}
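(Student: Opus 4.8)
The plan is to establish the dichotomy by ruling out everything except the two listed alternatives. I would argue by assuming $\kappa(M,J)\ge 0$ (the negation of alternative (1)) and showing that this forces the rigid situation in alternative (2), with $\kappa(M,J)=0$ exactly. Two ingredients are pitted against each other: (A) a curvature comparison, valid because the nearly-K\"ahler component $\mathcal{W}_1$ is absent, relating the Riemannian scalar curvature $s$ to the $J$-scalar curvature $s^J$; and (B) a positivity statement for the Gauduchon degree of the canonical bundle $K_M$ coming from the pseudoholomorphic section guaranteed by $\kappa(M,J)\ge 0$. The hypothesis $s\ge 0$ feeds (A) and the hypothesis $\kappa(M,J)\ge 0$ feeds (B); the two are compatible only in an equality case that is exactly the Calabi--Yau situation.

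For ingredient (A) I would start from the Gray--Hervella decomposition of $\nabla\omega$ and the universal identity expressing $2s^J-s$ as an exact divergence plus a quadratic expression in the intrinsic torsion (the tensors measuring the $\mathcal{W}_2,\mathcal{W}_3,\mathcal{W}_4$ components together with the Lee form $\theta$). On a compact manifold the divergence integrates away, and the point is that, with the $\mathcal{W}_1$-component absent, the remaining integrated torsion terms are nonnegative, giving $\int_M s\,dV_g\le 2\int_M s^J\,dV_g$ with equality if and only if all torsion components vanish, i.e. $(M,J,g)$ is K\"ahler. Here the restriction $n\ge 3$ is used to keep the decomposition in its generic form: in real dimension four one has $\mathcal{W}_1=\mathcal{W}_3=0$, the class collapses to $\mathcal{W}_2\oplus\mathcal{W}_4$, and the identity degenerates, which is precisely why the four-dimensional twistor case is treated separately. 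Combined with $s\ge 0$, ingredient (A) already gives $\int_M s^J\,dV_g\ge 0$.

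Ingredient (B) carries the almost-complex input. Since $\kappa(M,J)\ge 0$, the construction of \cite{ChZ1,ChZ2} provides, for some $m\ge 1$, a nontrivial pseudoholomorphic section $\sigma$ of $K_M^{\otimes m}$. Working with a Gauduchon metric adapted to $(J,g)$, I would run a Poincar\'e--Lelong argument: away from the zeros of $\sigma$ one has $\tfrac{i}{2\pi}\partial\bar\partial\log|\sigma|_h^2=[Z_\sigma]-m\,c_1(K_M,h)\ge -m\,c_1(K_M,h)$, and pairing with $\omega^{n-1}/(n-1)!$ and using the Gauduchon condition $\partial\bar\partial\,\omega^{n-1}=0$ makes the left-hand side integrate to zero. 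This gives $\deg_g(K_M)\ge 0$, and since $\deg_g(K_M)$ equals a negative multiple of the total curvature of the canonical connection---which matches $\int_M s^J\,dV_g$ up to the same torsion and divergence terms appearing in (A)---one obtains $\int_M s^J\,dV_g\le 0$. The hard part is exactly this step: when $J$ is non-integrable, $K_M$ is only a pseudoholomorphic line bundle, the Poincar\'e--Lelong formula and the effectiveness of the zero current $[Z_\sigma]$ must be re-established in the sense of \cite{ChZ1,ChZ2}, and one must confirm that the trace of the canonical curvature reproduces $s^J$ modulo controllable terms; reconciling this Gauduchon-metric computation with the pointwise hypothesis $s\ge 0$ (which lives on the original metric) is the other delicate point, to be handled via the conformal behavior of the class and Gauduchon's existence theorem.

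Finally I would combine the two bounds. They force $\int_M s^J\,dV_g=0$, hence $0\le\int_M s\,dV_g\le 0$ and, as $s\ge 0$, $s\equiv 0$; the equality case of (A) then makes all torsion components vanish, so $(M,J,g)$ is K\"ahler and in particular $J$ is integrable. Moreover equality in (B) means $\deg_g(K_M)=0$, so in the Poincar\'e--Lelong identity the effective zero current $[Z_\sigma]$ has zero mass against $\omega^{n-1}$ and is therefore empty: $\sigma$ is a nowhere-vanishing holomorphic section, $K_M^{\otimes m}$ is trivial, and $c_1(M)=0$. By Yau's solution of the Calabi conjecture $(M,J)$ then carries a Ricci-flat K\"ahler metric and is Calabi--Yau, and the triviality of $K_M^{\otimes m}$ bounds every plurigenus by one, so $\kappa(M,J)=0$. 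This is exactly alternative (2), completing the dichotomy.
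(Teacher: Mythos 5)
Your proposal takes a genuinely different route from the paper, and it has two real gaps. The paper never goes through the $J$-scalar curvature or Poincar\'e--Lelong. It works directly with the Chern scalar curvature: by formula (3.2), once $(dF)^-=0$ every torsion term in $S_1^{\mathrm{Ch}}-\tfrac{s}{2}$ other than the divergence $-\tfrac12\delta^g\alpha_F$ is manifestly nonnegative, and the conformal bookkeeping shows that this divergence contributes the nonnegative quantity $2(n-1)^2\int_M\langle df,df\rangle_{\hat g}\,dV_{\hat g}$ to the Gauduchon degree, whence $\int_M\hat S_1^{\mathrm{Ch}}\,dV_{\hat g}\ge0$. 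If this is positive, Proposition 3.2 gives a conformal metric with $\tilde S_1^{\mathrm{Ch}}>0$ everywhere and the Bochner identity $-\Delta^{\mathrm{Ch}}_{\tilde g}|\sigma|^2_{\tilde g}=2m\tilde S_1^{\mathrm{Ch}}|\sigma|^2_{\tilde g}+2|\tilde D\sigma|^2_{\tilde g}$ together with the maximum principle kills all pluricanonical sections; if it vanishes, all torsion terms and $s$ vanish identically, the metric is K\"ahler scalar-flat, and the K\"ahler theory (plus the Calabi--Yau theorem) finishes the argument.

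Concretely, your gaps are these. First, ingredient (A) is unsubstantiated and, as stated, appears false: the relevant identity (2.7) reads, for $(dF)^-=0$, $s_J=s+\tfrac14|N^0|^2-|\alpha_F|^2-2\delta^g\alpha_F$, so after integration the Lee-form term enters with a \emph{negative} sign; the ``remaining integrated torsion terms'' are not all nonnegative and the inequality $\int_M s\,dV_g\le 2\int_M s_J\,dV_g$ does not follow. (The combination that does behave well in the Hermitian case is $s+3s_J$; see Theorem 5.1.) Second, ingredient (B) rests on a Poincar\'e--Lelong formula and the positivity/effectiveness of the zero current of a pseudoholomorphic section of the merely pseudoholomorphic line bundle $K_M^{\otimes m}$ over a non-integrable almost complex manifold. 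You flag this yourself as ``the hard part'' but do not supply it, and it is not available off the shelf: $\bar\partial_m$ need not square to zero and zero sets of pseudoholomorphic sections of line bundles need not be analytic, so the whole chain $\deg_g(K_M)\ge0\Rightarrow\int_M s_J\,dV_g\le0$ is unproven exactly where the almost-complex hypothesis bites. The Bochner/maximum-principle route is precisely what lets the paper avoid this. A minor further point: the complex dimension $2$ case is not ``the twistor case''; it is Corollary 4.2, proved by the same argument since then $(dF)^-=(dF)_0^+=0$ automatically (the twistor spaces are $3$-dimensional examples treated in the appendix).
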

\noindent Here, the class $\mathcal{W}_2\oplus\mathcal{W}_3\oplus \mathcal{W}_4$, introduced by Gray-Hervella \cite{GrH}, constitutes a broad family of almost Hermitian structures that includes the Hermitian ones (see Section 2 for details). We remark that the proof of Theorem 1.1 also applies to almost Hermitian surfaces (see Corollary 4.2).

Our second main result requires a condition on the mixed scalar curvature. For this purpose, the concept of $J$-scalar curvature (also called $\ast$-scalar curvature) must be introduced. In the framework of non-K\"{a}hler geometry, the $J$-scalar curvature generally differs from the Riemannian scalar curvature. Its definition involves both the Levi-Civita connection and the almost complex structure $J$. The precise definition has been provided in Section 2.

Recall that a compact complex manifold is called Moishezon  if it is bimeromorphic to a projective manifold.  A compact complex manifold is called uniruled if it is covered by rational curves. Then one has
\begin{thm}\textup{(See Corollary 5.2).}
Let $(M,J)$ be a compact Moishezon manifold of complex dimension $n$. If $(M,J)$ admits a Hermitian metric $g$ such that its corresponding Riemannian scalar curvature $s$ and $J$-scalar curvature $s_J$ satisfing $3s_J+s>0$, then  $(M,J)$ is uniruled.
\end{thm}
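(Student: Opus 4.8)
The plan is to deduce uniruledness from the Chiose-R\u{a}sdeaconu-\c{S}uvaina criterion \cite{CRS}: a compact Moishezon manifold is uniruled if it carries a Gauduchon metric whose total Chern scalar curvature $\int_M s_C\, dV$ is positive. Thus the entire problem reduces to manufacturing, out of the pointwise hypothesis $3s_J+s>0$ on the given Hermitian metric $g$, a Gauduchon metric with strictly positive total Chern scalar curvature.

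The bridge is a pointwise identity on Hermitian manifolds relating the three scalar curvatures. Since $J$ is integrable, the Levi-Civita connection differs from the Chern connection only by the torsion of the latter (equivalently, by the Lee form together with the full torsion tensor). Expanding the Riemann tensor as the Chern curvature plus its first-order (one covariant derivative of the torsion) and zeroth-order (torsion-squared) corrections, and then taking the two different traces that define $s$ and $s_J$, I would arrive at an identity of the schematic form
\begin{equation*}
8\,s_C=(3s_J+s)+\Delta_C u+Q(T),
\end{equation*}
where $\Delta_C$ is the Chern Laplacian, $u$ is a globally defined function built from the torsion (Lee form), and $Q(T)$ is quadratic in the torsion. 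A reassuring consistency check is the K\"ahler case, where the torsion vanishes, $u\equiv0$, and $s_J=s$, $s_C=s/2$, so the identity collapses to $8s_C=4s=3s_J+s$. The role of the specific coefficient $3$ in front of $s_J$ is precisely that it makes the residual zeroth-order term assemble with a nonnegative sign, $Q(T)\ge0$; this is why the mixed scalar curvature $3s_J+s$, rather than some other linear combination, is the natural quantity.

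Next I would pass to the Gauduchon metric $g_G$ in the conformal class $[g]$, which exists and is unique up to scale for $n\ge2$, and integrate the identity against its volume form. The Gauduchon condition forces $\int_M \Delta_C u\, dV_{g_G}=0$, while $Q(T)\ge0$ gives $\int_M Q(T)\,dV_{g_G}\ge0$, hence
\begin{equation*}
8\int_M s_C\,dV_{g_G}\ge \int_M (3s_J+s)\,dV_{g_G}.
\end{equation*}
It therefore suffices to know that the total mixed scalar curvature of the Gauduchon representative is nonnegative and, together with $\int_M Q(T)\,dV_{g_G}$, strictly positive. Converting the pointwise positivity of $3s_J+s$ for $g$ into this integral statement for the conformally normalized metric $g_G$ is the heart of the matter; I would carry it out by an argument in the spirit of Yang \cite{Yang3}, absorbing the Chern-Laplacian term through a conformal change determined by $u$ and then using the Gauduchon normalization to secure the sign of the total. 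Once $\int_M s_C\,dV_{g_G}>0$, the criterion \cite{CRS} yields that $(M,J)$ is uniruled.

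The main obstacle is twofold. First, the curvature identity itself demands careful bookkeeping: one must expand the Levi-Civita curvature in Chern data with all torsion terms, recognize the first-order terms as a genuine Chern Laplacian (so that they die under Gauduchon integration), and, the truly delicate point, verify that the coefficient $3$ renders the leftover quadratic torsion term $Q(T)$ pointwise nonnegative rather than indefinite. Second, because none of $s$, $s_J$, $s_C$ is conformally invariant, the passage from the given $g$ to its Gauduchon representative must be controlled so that strict positivity survives; this conformal step, rather than the final reduction to \cite{CRS}, is where the real work lies.
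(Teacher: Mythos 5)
Your top-level strategy coincides with the paper's: produce a Gauduchon metric with positive total Chern scalar curvature and invoke the Chiose--R\u{a}sdeaconu--\c{S}uvaina criterion. But the two steps you yourself flag as ``where the real work lies'' contain a genuine gap, and the most natural reading of your sketch runs into an obstruction. For a Hermitian metric the exact pointwise identity behind your schematic one is $8S_1^{\mathrm{Ch}}=(3s_J+s)+2\,\delta^g\alpha_F+2|dF|^2+3|\alpha_F|^2$ (combine (5.4) with (2.7)). So your $Q(T)\ge 0$ is right, but the first-order term is the divergence of the Lee form, not the Chern Laplacian of a globally defined function $u$ (the Lee form need not be exact). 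More seriously, this identity lives on $g$, and integrating it against the Gauduchon volume $dV_{\hat g}$ with $\hat g=e^{2f}g$ does \emph{not} kill that term: by the paper's (4.6), $\int_M e^{-2f}\delta^g\alpha_F\,dV_{\hat g}=-4(n-1)^2\int_M|df|^2_{\hat g}\,dV_{\hat g}\le 0$, which enters your inequality with the \emph{wrong} sign, so $8\int_M\hat S_1^{\mathrm{Ch}}dV_{\hat g}\ge\int_M e^{-2f}(3s_J+s)\,dV_{\hat g}$ does not follow. The Gauduchon condition only annihilates $\int_M\Delta^{\mathrm{Ch}}_{\hat g}v\,dV_{\hat g}$ for the Laplacian of $\hat g$ itself.

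The paper's proof (of Theorem 5.1, which Corollary 5.2 cites) circumvents exactly this via a splitting absent from your proposal: $\hat S_1^{\mathrm{Ch}}=\tfrac12(\hat S_1^{\mathrm{Ch}}-\hat S_2^{\mathrm{Ch}})+\tfrac12(\hat S_1^{\mathrm{Ch}}+\hat S_2^{\mathrm{Ch}})$. The difference is controlled by Gauduchon's formula $\int_M(\hat S_1^{\mathrm{Ch}}-\hat S_2^{\mathrm{Ch}})dV_{\hat g}=\tfrac12\int_M|\alpha_{\hat F}|^2_{\hat g}dV_{\hat g}\ge0$. For the sum, the conformal laws (3.11)--(3.12) reduce $\int_M(\hat S_1^{\mathrm{Ch}}+\hat S_2^{\mathrm{Ch}})dV_{\hat g}$ to $\int_M e^{-2f}(S_1^{\mathrm{Ch}}+S_2^{\mathrm{Ch}})dV_{\hat g}$, and the key point is that for the combination $S_1^{\mathrm{Ch}}+S_2^{\mathrm{Ch}}$ the divergence term cancels entirely against the $-2\delta^g\alpha_F$ hidden inside $s_J$ in (2.7), giving the derivative-free identity $S_1^{\mathrm{Ch}}+S_2^{\mathrm{Ch}}=\frac{3s_J+s}{4}+\frac12|dF|^2+\frac14|\alpha_F|^2$. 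Pointwise positivity of $3s_J+s$ for the original $g$ then yields positivity of the integral with no further conformal control. This cancellation, not the nonnegativity of $Q(T)$, is the true role of the coefficient $3$, and it is the ingredient (together with $S_2^{\mathrm{Ch}}$) that your proposal is missing.
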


As an important example, in the appendix, we provide a detailed analysis of various scalar curvatures on the twistor space $(\mathbf{Z},\mathbb{J}_{\pm})$ of an anti-self-dual 4-manifold, where the complex structure $\mathbb{J}_+$ was introduced by Atiyah-Hitchin-Singer \cite{AHS} and the almost complex structure $\mathbb{J}_-$ was introduced by Eells-Salamon \cite{ES}. Building upon Hitchin's famous classification of K\"{a}hlerian twistor spaces
\cite {Hit} and Verbitsky's results of pluriclosed (or strong K\"{a}hler with torsion) metrics on twistor spaces \cite{Ver}, we obtain
\begin{thm}
Let $(N,g_N)$ be a compact anti-self-dual 4-manifold with metric $g_N$,  and $(\mathbf{Z},\mathbb{J}_{\pm})$ is the corresponding twistor space endowed with structures $\mathbb{J}_+$ and $\mathbb{J}_-$.

(1) If $(N,g_N)$ is also an Einstein manifold with negative scalar curvature, then
 $(\mathbf{Z}, \mathbb{J}_+)$ does not admit any pluriclosed metric, but admits balanced metrics with positive, negative and zero Riemannian scalar curvature (Chern scalar curvature, respectively). Furthermore, the canonical line bundle and the anti-canonical line bundle of $(\mathbf{Z}, \mathbb{J}_+)$ are not pseudo-effective.

(2) With respect to $\mathbb{J}_-$, then the Kodaira dimension $\kappa(\mathbf{Z},\mathbb{J}_-)=0$.
\end{thm}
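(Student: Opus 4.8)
I would treat the two parts of the theorem by different routes. For part~(1), write $\mathbf{Z}=(\mathbf{Z},\mathbb{J}_+)$ and let $\{g_t\}_{t>0}$ be the natural family of Riemannian submersion metrics obtained by rescaling the round fibres of $\mathbf{Z}\to N$ by $t$. Each $g_t$ is $\mathbb{J}_+$-Hermitian, and since the fibres are totally geodesic complex curves one checks $d(\omega_t^2)=0$, so every $g_t$ is balanced. That $(\mathbf{Z},\mathbb{J}_+)$ admits no pluriclosed metric I would deduce from Verbitsky's theorem \cite{Ver}, that a compact anti-self-dual twistor space carrying a pluriclosed (SKT) metric is already K\"ahler, combined with Hitchin's classification \cite{Hit} of K\"ahlerian twistor spaces as $\mathbb{CP}^3$ and the flag threefold, i.e.\ $N=S^4$ or $\mathbb{CP}^2$; an Einstein base with negative scalar curvature is neither. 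To realize all signs of curvature I would insert the Atiyah--Hitchin--Singer curvature formulas \cite{AHS}: when $N$ is Einstein the scalar curvatures $s(g_t)$ and $s_C(g_t)$ are constant on $\mathbf{Z}$, are positive for small $t$ (where the collapsing fibres dominate) and, as $N$ has negative scalar curvature, become negative for large $t$ (where the base dominates), so by continuity each attains positive, zero and negative values. Finally, recalling that for a Gauduchon metric a pseudo-effective line bundle has nonnegative degree, the member of $\{g_t\}$ with positive total Chern scalar curvature has $\deg_{g_t}K_{\mathbf{Z}}<0$ while the member with negative total Chern scalar curvature has $\deg_{g_t}(-K_{\mathbf{Z}})<0$; hence neither $K_{\mathbf{Z}}$ nor $-K_{\mathbf{Z}}$ is pseudo-effective.

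For part~(2), denote by $K^-=\Lambda^{3,0}_{\mathbb{J}_-}$ the canonical bundle of $(\mathbf{Z},\mathbb{J}_-)$ and recall the Chen--Zhang definition \cite{ChZ1,ChZ2} of $\kappa(\mathbf{Z},\mathbb{J}_-)$ as the growth rate in $m$ of $h^0\big((K^-)^{\otimes m}\big)$, the dimension of the space of $\mathbb{J}_-$-pseudoholomorphic sections. Using the $\mathbb{J}_-$-invariant splitting $T\mathbf{Z}=\mathcal{V}\oplus\mathcal{H}$ into vertical and horizontal distributions, and that $\mathbb{J}_-$ differs from $\mathbb{J}_+$ only by reversing orientation along the fibres, on a twistor fibre $F\cong\mathbb{CP}^1$ one has $c_1(\mathcal{H})\cdot[F]=2$ (since $\det_{\mathbb{C}}\mathcal{H}=(\Lambda^{2,0}_{\mathcal{H}})^{*}$ restricts to $\mathcal{O}(2)$) while $c_1(\mathcal{V},\mathbb{J}_-)\cdot[F]=-2$, so that
\[
c_1(\mathbf{Z},\mathbb{J}_-)\cdot[F]=c_1(\mathcal{H})\cdot[F]+c_1(\mathcal{V},\mathbb{J}_-)\cdot[F]=2-2=0 .
\]
As each fibre is a $\mathbb{J}_-$-invariant real surface, hence a Riemann surface, the bundle $(K^-)^{\otimes m}|_F$ has degree $0$ on $\mathbb{CP}^1$ and is therefore holomorphically trivial.

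Both required bounds then follow from a single input: a nowhere-vanishing $\mathbb{J}_-$-pseudoholomorphic section $\sigma$ of a fixed power $(K^-)^{\otimes m_0}$. Granting $\sigma$, the powers $\sigma^{\otimes k}$ give $h^0\big((K^-)^{\otimes km_0}\big)\ge1$, so $\kappa\ge0$. For the reverse inequality, if $s$ is any nonzero pseudoholomorphic section of $(K^-)^{\otimes m}$, then $s^{\otimes m_0}/\sigma^{\otimes m}$ is a pseudoholomorphic function on the compact connected manifold $\mathbf{Z}$, hence a constant $c$; as $s\neq0$ forces $c\neq0$, the section $s$ is nowhere vanishing, and the same argument applied to the ratio of two such sections shows they are proportional, whence $h^0\big((K^-)^{\otimes m}\big)\le1$ for every $m$. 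Combining the two gives $\kappa(\mathbf{Z},\mathbb{J}_-)=0$. Here I use the standard fact that a pseudoholomorphic function on a compact connected almost complex manifold is constant, which follows from the maximum principle for $|f|$ along local pseudoholomorphic disks together with unique continuation for $\bar\partial_{\mathbb{J}_-}$.

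I expect the construction of $\sigma$ to be the crux. The fibrewise computation controls only the vertical behaviour, whereas the real point is that $K^-$ is flat in the horizontal directions as well. I would first verify the natural twistor identification $\det_{\mathbb{C}}\mathcal{H}\cong\mathcal{V}^{+}$, which gives $c_1(\mathbf{Z},\mathbb{J}_-)=c_1(\mathcal{H})-c_1(\mathcal{V}^{+})=0$ and hence smooth triviality of $K^-$, and then upgrade this to pseudoholomorphic triviality of a suitable power by computing the first Chern--Ricci form of $(\mathbf{Z},\mathbb{J}_-,g_t)$ from the Atiyah--Hitchin--Singer and Eells--Salamon curvature data \cite{AHS,ES} and showing that it vanishes; the anti-self-duality of $N$ is precisely what forces the horizontal curvature terms to cancel, as in the nearly-K\"ahler models arising from $S^4$ and $\mathbb{CP}^2$. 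A parallel section of the resulting flat Chern connection on a power $(K^-)^{\otimes m_0}$ is automatically $\mathbb{J}_-$-pseudoholomorphic and furnishes the desired $\sigma$.
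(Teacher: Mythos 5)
Your part (1) follows essentially the paper's route: non-existence of pluriclosed metrics from Verbitsky's theorem combined with Hitchin's classification, balancedness of the $g_t$ from anti-self-duality of $N$ (the paper cites Michelsohn and Mu\v{s}karov for $F_{\pm}(t)\wedge dF_{\pm}(t)=0$), the sign analysis from the explicit formulas $s(t)=s_N+\frac{2}{t^2}-\frac{s_N^2}{72}t^2$ and $s_1^{\mathrm{Ch}}(\mathbb{J}_+,t)=\frac{s_N}{3}+\frac{2}{t^2}$, and non-pseudo-effectiveness of $\pm K_{\mathbf{Z}}$ from the degree/total-Chern-scalar-curvature criterion (Yang's theorem). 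That part is fine.

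Part (2) contains a genuine gap. Your reduction is correct: given a nowhere-vanishing $\bar\partial_{\mathbb{J}_-}$-closed section $\sigma$ of some power of $K_{\mathbb{J}_-}$, the quotient trick plus constancy of pseudoholomorphic functions on a compact connected manifold gives $P_m=1$ for all $m$, hence $\kappa(\mathbf{Z},\mathbb{J}_-)=0$. But your proposed construction of $\sigma$ does not deliver it. Vanishing of the Chern--Ricci form $\rho_-(t)$ only makes the Chern connection on $K_{\mathbb{J}_-}$ flat; a flat unitary connection on a line bundle admits a global parallel section only if its holonomy representation $\pi_1(\mathbf{Z})\to U(1)$ is trivial, and a parallel section of $(K_{\mathbb{J}_-})^{\otimes m_0}$ only if that holonomy has order dividing $m_0$. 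Neither follows from $\rho_-=0$, and the relevant spaces are typically not simply connected: $\pi_1(\mathbf{Z})\cong\pi_1(N)$, and anti-self-dual $4$-manifolds (for instance the real- or complex-hyperbolic Einstein examples with $s_N<0$ from part (1), or the connected sums produced by Taubes) generally have large fundamental group. Note also that part (2) of the theorem assumes only anti-self-duality, not Einstein. The paper closes exactly this gap by exhibiting an explicit trivialization: the form $\varphi^1\wedge\varphi^2\wedge\overline{\varphi^3}$, built from the canonical and connection forms on the orthonormal frame bundle, is invariant under the structure group and hence descends to a global nowhere-vanishing section of $K_{\mathbb{J}_-}$, and a structure-equation computation using $W^+=0$ shows that $d(\varphi^1\wedge\varphi^2\wedge\overline{\varphi^3})$ has only $(2,2)$-components, so $\bar\partial_{\mathbb{J}_-}(\varphi^1\wedge\varphi^2\wedge\overline{\varphi^3})=0$. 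You need this explicit global section (or an independent proof that the flat holonomy is trivial); flatness of the Chern connection alone is not enough.
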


The rest of this paper is organized as follows. In section 2, we provide a brief description of almost Hermitian geometry, including the
Gray-Hervella's classification of almost Hermitian structures, the relationship between the $J$-scalar curvature and the Riemannian scalar curvature. In section 3, we introduce a natural 2-parameter family of conformal invariants for Chern scalar curvatures.
We also recall the definition of Kodaira dimension for compact almost complex manifolds. In section 4, we present the proof of Theorem 1.1, and we also give some examples from twistor spaces. In section 5, we study the Kodaira dimension of compact Hermitian manifolds with an assumption of nonnegative mixed scalar curvature and offer some related remarks. Finally, in the appendix, a detailed analysis of various scalar curvatures on the twistor space is provided, which directly leads to the proof of Theorem 1.3.

In this paper, we will introduce the following notational conventions as in \cite{Gau4, FuZ}: the metric $g$ induces natural inner product $\langle\ , \ \rangle$ and the corresponding norm $|~|$ on the bundle $\Lambda^l M$ of all $l$-forms and  on the bundle $TM\otimes \Lambda^l M$ of all $TM$-valued $l$-forms. Quantities induced by other metrics will be indicated with subscripts.

\section{Preliminaries on almost Hermitian geometry}

Let $(M,J,g)$ be an almost Hermitian manifold of complex dimension $n$, $n\geq2$. $J$ is an almost complex structure which is compatible with
the Riemannian metric $g=\langle\ , \ \rangle$.
The fundamental 2-form $F$ associated to $g$ and $J$ is defined by
\[
F(X,Y)=g(JX,Y)
\]
for all vector fields $X,\,Y$ and the volume form of the metric $g$ is $dV_g=\frac{F^n}{n!}$.

Another important differential form of $(M,J,g)$ is the Lee form $\alpha_F=J\delta^g F$ where $\delta^g$ is the codifferential operator, i.e., the formal adjoint of $d$ in the metric $g$. It can be determined by the formula
\begin{equation}
dF=(dF)_0+\frac{1}{n-1}\alpha_F\wedge F
\end{equation}
where $(dF)_0$ is the primitive part of $dF$.
Denote by $(dF)^+$ the sum of $(2,1)$ and $(1,2)$ components of $dF$  and by $(dF)^-$ the sum of $(3,0)$ and $(0,3)$ components of $dF$.
The formula (2.1)  implies
\begin{equation}
(dF)^+=(dF)_0^+ +\frac{1}{n-1}\alpha_F\wedge F
\end{equation}
where $(dF)^+_0$ is the primitive part of $(dF)^+$.

The Nijenhuis tensor $N$ of $J$ is a $(1,2)$-type tensor defined by
\[
N(X,Y)=[X,Y]+J[JX,Y]+J[X,JY]-[JX,JY]
\]
for all vector fields $X,Y$. It is well-known that $N\equiv 0$ if and only if $J$ is integrable.
 $N$ can be viewed as a $(0,3)$-tensor:  $N(X,Y,Z)=\langle X, N(Y,Z)\rangle$.
Denote by $\mathfrak{b}N$ its skew-symmetric part:
\begin{equation*}
\mathfrak{b}N(X,Y,Z)=\frac{1}{3}\left(N(X,Y,Z)+N(Y,Z,X)+N(Z,X,Y)\right).
\end{equation*}
Define $N^0=N-\mathfrak b N$. Clearly, $\mathfrak{b}N^0=0$.

Denote by $\nabla$ the Levi-Civita connection of the metric $g$. We have
\begin{equation*}
(\nabla_X F)(Y,Z)=\langle(\nabla_X J)Y , Z \rangle.
\end{equation*}
We also use the notation:$(\nabla F)(X,Y,Z)=(\nabla_X F)(Y,Z)$.
According to Proposition 1 in \cite{Gau4}, $\nabla F$ can be decomposed as
\begin{equation}
\begin{aligned}
(\nabla_X F)&(Y,Z)
=\frac{1}{3}(dF)^-(X,Y,Z)-\frac{1}{2}N^0(JX,Y,Z)\\
&\quad \quad\quad+\frac{1}{2}(dF)^+(X,Y,Z)-\frac 1 2 (dF)^+(X,JY,JZ)
\end{aligned}
\end{equation}
for all vector fields $X,\,Y,\,Z$.

Through a direct calculation as in \cite{Gau4, FuZ}, one gets the following important pointwise formula
\begin{equation}
|\nabla F|^2=\frac{|\alpha_F|^2}{n-1}+|(dF)_0^+|^2+\frac{1}{4}|N^0|^2+\frac{1}{3}|(dF)^-|^2.
\end{equation}
For an almost Hermitian manifold, the above four components $(dF)^-$, $N^0$, $(dF)_0^+$ and $\alpha_F$ carry important geometric information. Gray-Hervella \cite{GrH} introduced that in a natural way there are precisely sixteen (four, respectively) classes of almost Hermitian manifolds of complex dimension $n\geq 3$ ($n=2$, respectively). In fact, the defining conditions of each class are by letting some elements of set
$\{(dF)^-, N^0, (dF)_0^+, \alpha_F\}$ equal to zero. For example, $\mathcal W_3\oplus\mathcal W_4$ is the class of Hermitian manifolds: $(dF)^-=N^0=0$; $\mathcal{W}_2\oplus\mathcal{W}_3\oplus \mathcal{W}_4$ is the class: $(dF)^-=0$.

The Riemannian curvature tensor $R$ of the metric $g$ is defined by
\begin{equation*}
R(X,Y,Z,W)=\langle\nabla_Z \nabla_W Y-\nabla_W \nabla_Z Y-\nabla_{[Z,W]}Y, X\rangle
\end{equation*}
for all vector fields $X,\,Y,\,Z,\,W$.

Choose a local $J$-adapted $g$-orthonormal frame field
$\{e_i, e_{n+i}=Je_i\}_{i=1,2,\dots,n}$ and hence a unitary frame field
$\{u_i=\frac{1}{\sqrt{2}}(e_i-\sqrt{-1}e_{n+i})\}_{i=1,2,\dots,n}$. Set $u_{\bar{i}}=\overline{u_i}$. Then the Ricci tensor ${Ric}$ of $g$ is defined by
$Ric(X,Y)=\sum_{A=1}^{2n} R(e_A,X,e_A,Y)$ and the Riemannian scalar curvature $s$ of $g$ is defined by $s=\sum_{A=1}^{2n} {Ric}(e_A,e_A)$.

On the other hand, on an almost Hermitian manifold $(M,J,g)$, there is a $J$-twisted version of the Ricci tensor called the $J$-Ricci tensor
(also called $\ast$-Ricci tensor in some literatures) \cite{GrH,TrV,dRS} and denoted by $Ric_{J}$. It is defined by
\begin{equation*}
Ric_{J}(X,Y)=\sum_{A=1}^{2n} R(e_A,X,Je_A,JY).
\end{equation*}
In general,
$Ric_J\neq Ric$, $Ric_J (X,Y)\neq Ric_J(Y,X)$. However, we have $Ric_J (X,Y)=Ric_J(JY,JX)$.
We define the $J$-scalar curvature $s_J$ (also called $\ast$-scalar curvature) as
\begin{equation*}
s_J=\sum_{A=1}^{2n} Ric_J (e_A,e_A).
\end{equation*}
Here, we adopt the notations ($Ric_J$ and $s_J$) consistent with \cite{dRS} to emphasize the direct connection between these geometric quantities and almost complex structure $J$, while avoiding confusion with Hodge $\ast_g$-operator.

With respect to a unitary frame $\{u_i\}_{i=1,2,\dots,n}$, the $J$-scalar curvature $s_J$  and  the Riemannian scalar curvature $s$ of $g$ are also given by
\begin{equation}
s_J=2\sum_{i,j=1}^{n}R(u_{\bar{i}},u_i,u_j,u_{\bar{j}}),
\end{equation}
\begin{equation}
\begin{aligned}
s&=4\sum_{i,j=1}^{n}R(u_{\bar{i}},u_{\bar{j}},u_i,u_j)+2\sum_{i,j=1}^{n}R(u_{\bar{i}},u_i,u_j,u_{\bar{j}})\\
&=4\sum_{i,j=1}^{n}R(u_{\bar{i}},u_j,u_i,u_{\bar{j}})-2\sum_{i,j=1}^{n}R(u_{\bar{i}},u_i,u_j,u_{\bar{j}}).
\end{aligned}
\end{equation}

Moreover,  from \cite[(2.11)]{FuZ}, on an almost Hermitian manifold, we have the following useful relation,
\begin{equation}
s_J=s-\frac{2}{3}|(dF)^-|^2+\frac{1}{4}|N^0|^2-|\alpha_F|^2-2\delta^g\alpha_F.
\end{equation}

\section{Chern scalar curvatures and Kodaira dimension}

As in section 2, let $(M,J,g)$ be an almost Hermitian manifold and $\nabla$ the Levi-Civita connection of the metric $g$.
The Chern connection $D$ is the unique connection characterized by the conditions $D J=0$, $D g=0$, and the vanishing of the (1,1)-component of its torsion. The relationship between the Chern connection $D$ and the Levi-Civita connection $\nabla$ is given by\cite{Gau4}
\begin{equation*}
\begin{aligned}
&g(D_X Y,Z)
=g(\nabla_X Y-\frac{1}{2}J(\nabla_X J)Y,Z)\\
&\quad\quad\quad\quad\quad
+\frac{1}{4}g\left((\nabla_{JY}J) Z+J(\nabla_Y J)Z,X\right)-\frac{1}{4}g\left((\nabla_{JZ}J) Y+J(\nabla_Z J)Y,X\right).
\end{aligned}
\end{equation*}
The Chern curvature tensor $R^{\mathrm{Ch}}$ is defined by
\begin{equation*}
R^{\mathrm{Ch}}(X,Y,Z,W)=\langle D_Z D_W Y-D_W D_Z Y-D_{[Z,W]} Y, X\rangle
\end{equation*}
for all  vector fields $X,Y,Z,W$.

With respect to a unitary frame $\{u_i\}_{i=1,2,\dots,n}$, the Chern-Ricci form is
\begin{equation}
\rho_g=\sqrt{-1}\sum_{i=1}^{n}R^{\mathrm{Ch}}(u_{\bar{i}},u_i, ., .)\in 2\pi c_1(M, J),
\end{equation}
where $c_1(M, J)$ is the first Chern class of $(M, J)$. Two scalar curvatures $S_1^{\mathrm{Ch}}$ and $S_2^{\mathrm{Ch}}$ of the Chern connection are defined, respectively, by
\begin{equation*}
S_1^{\mathrm{Ch}}=\sum_{i,j=1}^{n}R^{\mathrm{Ch}}(u_{\bar{i}},u_i,u_j,u_{\bar{j}})\quad \textup{and}\quad S_2^{\mathrm{Ch}}= \sum_{i,j=1}^{n}R^{\mathrm{Ch}}(u_{\bar{i}},u_j,u_i,u_{\bar{j}}).
\end{equation*}
It is worth noting that $S_1^{\mathrm{Ch}}$ is the usual Chern scalar curvature.

Moreover, from \cite[Theorem 1.1]{FuZ} or \cite{LeU} in a different version, we have the following important relations between
Chern scalar curvatures and Riemannian scalar curvature,

\begin{equation}
S_1^{\mathrm{Ch}}
=\frac{s}{2}-\frac{5}{12}|(dF)^-|^2+\frac{1}{16}|N^0|^2+\frac{1}{4}|(dF)_0^+|^2+\frac{1}{4(n-1)}
|\alpha_F|^2-\frac{1}{2}\delta^g \alpha_F
\end{equation}
and
\begin{equation}
S_2^{\mathrm{Ch}}
=\frac{s}{2}-\frac{1}{12}|(dF)^-|^2+\frac{1}{32}|N^0|^2+\frac{1}{4}|(dF)_0^+|^2
+\Bigl(\frac{1}{4(n-1)}-\frac{1}{2}\Bigr)|\alpha_F|^2-\delta^g \alpha_F.
\end{equation}

With respect to the Chern connection $D$, we define Chern laplacian  $\Delta_{g}^{\mathrm{Ch}}$ by
\begin{equation}
\Delta_{g}^{\mathrm{Ch}} f=-2\sum_{i=1}^{n} (Dd f)(u_i,u_{\bar{i}}),
\end{equation}
for any smooth function $f$ on $M$. There are other alternative ways to write $\Delta_g^{\mathrm{Ch}} f$ as\cite{Gau3,TWY}
\begin{equation}
\Delta_g^{\mathrm{Ch}} f=\Delta_g f+\langle \alpha_{F}, df \rangle=-\langle dJdf, F \rangle,
\end{equation}
where $\Delta_g=d \delta^g+\delta^g d$ is the Hodge laplacian.

We recall the foundational theorem by Gauduchon on the existence of a standard metric in any Hermitian conformal class.
\begin{thm}\cite{Gau1}
Let $(M,J,g)$ be a compact almost Hermitian manifold of complex dimension $n\geq 2$. Then  the conformal class $[g]$ admits a natural base-point $\hat{g}$, which is characterized by having a co-closed Lee form, after normalizing $\hat{g}$ to unit volume.
\end{thm}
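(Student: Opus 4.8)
The plan is to reduce the existence of the base-point $\hat g\in[g]$ to solving a linear elliptic equation for the conformal factor, and then to extract a \emph{positive} solution using the adjoint of the Chern Laplacian. I would write a general metric in the conformal class as $\hat g=e^{2f}g$, so that $\hat F=e^{2f}F$, and first record how the Lee form transforms. Applying $d$ to $\hat F=e^{2f}F$ and using the defining relation (2.1) for $g$, the extra term $2\,df\wedge F$ contributes only to the $\alpha\wedge F$ part of the Lefschetz decomposition: a $1$-form is automatically primitive, and primitivity of a $3$-form is conformally invariant since it is tested against $F^{n-2}$. This yields the standard rule $\alpha_{\hat F}=\alpha_F+2(n-1)\,df$ for the Lee form, valid in the full almost Hermitian generality.

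Next I would impose the Gauduchon condition $\delta^{\hat g}\alpha_{\hat F}=0$. Using the conformal transformation law for the codifferential on $1$-forms, $\delta^{\hat g}\beta=e^{-2f}\bigl(\delta^g\beta-(2n-2)\langle df,\beta\rangle\bigr)$, and substituting $\alpha_{\hat F}=\alpha_F+2(n-1)\,df$, I obtain after clearing the factor $e^{2f}$ a quasilinear equation for $f$ whose only nonlinearity is a term proportional to $|df|^2$. The decisive device is the substitution $u=e^{2(n-1)f}$, which cancels this quadratic term and linearizes the equation; a short computation using the product rule $\delta^g(u\,\alpha_F)=u\,\delta^g\alpha_F-\langle du,\alpha_F\rangle$ collapses everything to
\begin{equation*}
\Delta_g u+\delta^g(u\,\alpha_F)=0.
\end{equation*}
I would then recognize the operator $P^{\ast}u:=\Delta_g u+\delta^g(u\,\alpha_F)$ as the formal $L^2$-adjoint of the Chern Laplacian: integrating by parts gives $\int_M (P^{\ast}u)\,w\,dV_g=\int_M u\,(\Delta_g w+\langle\alpha_F,dw\rangle)\,dV_g=\int_M u\,\Delta_g^{\mathrm{Ch}}w\,dV_g$, so $P^{\ast}=(\Delta_g^{\mathrm{Ch}})^{\ast}$ by (3.5).

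With the problem recast as finding $u>0$ in $\ker P^{\ast}$, I would argue as follows. The Chern Laplacian $\Delta_g^{\mathrm{Ch}}=\Delta_g+\langle\alpha_F,d\,\cdot\,\rangle$ is second-order elliptic with no zeroth-order term, so by Hopf's strong maximum principle on the closed manifold $M$ its kernel consists exactly of the constants, whence $\dim\ker\Delta_g^{\mathrm{Ch}}=1$. Since $\Delta_g^{\mathrm{Ch}}$ and $P^{\ast}$ share the principal symbol of the Laplacian, both have vanishing index, so $\dim\ker P^{\ast}=\dim\ker\Delta_g^{\mathrm{Ch}}=1$. Setting $u=e^{2(n-1)f}$ and $\hat g=e^{2f}g$ for a generator $u>0$ of $\ker P^{\ast}$ then produces the Gauduchon metric, and one-dimensionality forces $u$, hence $\hat g$, to be unique up to a positive multiplicative constant, which the unit-volume normalization pins down.

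The main obstacle is precisely the positivity of the generator of $\ker P^{\ast}$, because $P^{\ast}$ carries a zeroth-order term $u\,\delta^g\alpha_F$ of indefinite sign, so the strong maximum principle does not apply to $P^{\ast}$ directly. I would resolve it by spectral positivity: the diffusion operator $\Delta_g^{\mathrm{Ch}}$ has principal eigenvalue $0$ realized by the positive eigenfunction $\equiv 1$, and the Krein--Rutman theorem (equivalently, Perron--Frobenius theory for the positivity-preserving semigroup generated by $-\Delta_g^{\mathrm{Ch}}$) guarantees that this principal eigenvalue is simple and that the corresponding eigenfunction of the adjoint $P^{\ast}$ is strictly positive. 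This furnishes the required $u>0$ spanning $\ker P^{\ast}$ and completes the construction; note that the whole argument is insensitive to integrability of $J$, so it applies verbatim to the almost Hermitian setting.
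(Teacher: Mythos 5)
Your proposal is correct, but note first that the paper itself offers no proof of this statement: it is quoted verbatim from Gauduchon's paper \cite{Gau1} as Theorem 3.1, so there is no internal argument to compare against; the right benchmark is Gauduchon's original proof. Measured against that, your argument is essentially the classical one, and every step checks out: the Lee form law $\alpha_{\hat F}=\alpha_F+2(n-1)df$ (your primitivity remark is the right justification, since $e^{2f}(dF)_0\wedge \hat F^{n-2}=e^{2(n-1)f}(dF)_0\wedge F^{n-2}=0$), the codifferential law matching the paper's (4.5), and the decisive substitution $u=e^{2(n-1)f}$, which indeed yields
\begin{equation*}
e^{2f}u\,\delta^{\hat g}\alpha_{\hat F}
=u\Bigl(2(n-1)\bigl(\Delta_g f-\langle df,\alpha_F\rangle-2(n-1)|df|^2\bigr)+\delta^g\alpha_F\Bigr)
=\Delta_g u+\delta^g(u\,\alpha_F)=(\Delta_g^{\mathrm{Ch}})^{\ast}u,
\end{equation*}
so the Gauduchon condition for $\hat g=e^{2f}g$ is exactly $u>0$, $(\Delta_g^{\mathrm{Ch}})^{\ast}u=0$ (this also correctly generalizes the paper's (3.7), which is stated only at a Gauduchon base-point where $\delta^g\alpha_F=0$). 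The kernel computation ($\ker\Delta_g^{\mathrm{Ch}}=$ constants by the strong maximum principle, index zero by equality of principal symbols, hence $\dim\ker(\Delta_g^{\mathrm{Ch}})^{\ast}=1$) and the uniqueness bookkeeping ($u\mapsto cu$ is the homothety $\hat g\mapsto c^{1/(n-1)}\hat g$, killed by the unit-volume normalization, using $n\geq2$) are exactly as in Gauduchon. The one place you genuinely diverge is the positivity of the generator of $\ker(\Delta_g^{\mathrm{Ch}})^{\ast}$, which you correctly identify as the crux (the zeroth-order coefficient $\delta^g\alpha_F$ has no sign, so the maximum principle fails for the adjoint directly): your Krein--Rutman/invariant-measure argument is valid, since the semigroup generated by $-\Delta_g^{\mathrm{Ch}}$ is positivity-improving and Markov, and its adjoint inherits positivity improvement, while the Harnack inequality (which needs no sign hypothesis on the zeroth-order term) upgrades nonnegativity to strict positivity. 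The classical route is more elementary and worth knowing: if $u$ changed sign one could choose $h>0$ with $\int_M hu\,dV_g=0$, solve $\Delta_g^{\mathrm{Ch}}w=h$ by the Fredholm alternative, and reach a contradiction at an interior minimum of $w$, where $dw=0$ and $\Delta_g^{\mathrm{Ch}}w\leq 0<h$; so Krein--Rutman buys conceptual clarity (spectral simplicity and positivity in one stroke) at the cost of heavier machinery, but both are complete proofs, and your whole argument is, as you say, insensitive to the integrability of $J$.
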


We use the superscript ~$\hat{}$~ to denote quantities related to the Gauduchon metric $\hat{g}$. Based on this result by Gauduchon, we introduce the following mixed conformal invariant:
\begin{equation}
\Gamma_{\lambda,\mu}(M,J,[g])=\int_M \left(\lambda \hat{S}_1^{\mathrm{Ch}}+\mu \hat{S}_2^{\mathrm{Ch}}\right)dV_{\hat{g}},
\end{equation}
where $\lambda$ and $\mu$ are two constants. In particular, $\Gamma_{1,0}(M,J,[g])$ corresponds to the degree of the
anti-canonical line bundle $K_M^{-1}$.

The following proposition extends a result of Balas \cite[Proposition 1.8]{Bal} to the almost Hermitian setting.
\begin{prop}
Let $(M,J,g)$ be a compact almost Hermitian manifold of complex dimension $n$. If $n\lambda + \mu \ne 0$, then
there exists a conformal metric $\tilde{g} \in [g]$ whose mixed Chern scalar curvature $\lambda \tilde{S}_1^{\mathrm{Ch}} + \mu \tilde{S}_2^{\mathrm{Ch}}$
has the same sign as $\Gamma_{\lambda,\mu}(M,J,[g])$.
\end{prop}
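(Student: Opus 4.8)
The plan is to fix a Gauduchon representative $\hat g\in[g]$, normalized to unit volume as in Theorem~3.1, write every competitor as $\tilde g=e^{2u}\hat g$ with $u\in C^\infty(M)$, and reduce the whole problem to solving one linear elliptic equation for $u$. The first task is to record the conformal transformation laws for the two Chern scalar curvatures. From the definition of $S_1^{\mathrm{Ch}}$ together with (3.1) one has $S_1^{\mathrm{Ch}}=\langle\rho_g,F\rangle_g$; since $\rho_g$ represents $2\pi c_1(M,J)$ and rescales with the induced volume form, the Chern--Ricci form transforms by a term proportional to $dJd\,u$, namely $\rho_{\tilde g}=\rho_{\hat g}-n\,dJd\,u$. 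Contracting against $\tilde F=e^{2u}\hat F$, using that $\langle\,\cdot\,,\cdot\,\rangle_{\tilde g}=e^{-4u}\langle\,\cdot\,,\cdot\,\rangle_{\hat g}$ on $2$-forms and the identity $\Delta_{\hat g}^{\mathrm{Ch}}u=-\langle dJd\,u,\hat F\rangle$ from (3.5), I expect to obtain $\tilde S_1^{\mathrm{Ch}}=e^{-2u}\bigl(\hat S_1^{\mathrm{Ch}}+n\,\Delta_{\hat g}^{\mathrm{Ch}}u\bigr)$. The analogous computation for the trace defining $S_2^{\mathrm{Ch}}$, keeping track of the Lee-form shift $\alpha_{\tilde F}=\alpha_{\hat F}+2(n-1)\,du$, should give $\tilde S_2^{\mathrm{Ch}}=e^{-2u}\bigl(\hat S_2^{\mathrm{Ch}}+\Delta_{\hat g}^{\mathrm{Ch}}u\bigr)$. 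A useful consistency check is that the difference then satisfies $\tilde S_1^{\mathrm{Ch}}-\tilde S_2^{\mathrm{Ch}}=e^{-2u}\bigl((\hat S_1^{\mathrm{Ch}}-\hat S_2^{\mathrm{Ch}})+(n-1)\Delta_{\hat g}^{\mathrm{Ch}}u\bigr)$, which one can verify directly against the codifferential transformation of the torsion expression for $S_1^{\mathrm{Ch}}-S_2^{\mathrm{Ch}}$ obtained by subtracting (3.2) and (3.3).

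Combining the two laws, the mixed curvature transforms by
\[
\lambda\tilde S_1^{\mathrm{Ch}}+\mu\tilde S_2^{\mathrm{Ch}}
=e^{-2u}\Bigl[(\lambda\hat S_1^{\mathrm{Ch}}+\mu\hat S_2^{\mathrm{Ch}})+(n\lambda+\mu)\,\Delta_{\hat g}^{\mathrm{Ch}}u\Bigr].
\]
Since $e^{-2u}>0$, it suffices to choose $u$ making the bracket a constant of the desired sign, and here the hypothesis $n\lambda+\mu\neq0$ is exactly what lets the Chern Laplacian be inverted. Concretely, I would solve
\[
\Delta_{\hat g}^{\mathrm{Ch}}u=\frac{1}{n\lambda+\mu}\Bigl(\Gamma_{\lambda,\mu}(M,J,[g])-(\lambda\hat S_1^{\mathrm{Ch}}+\mu\hat S_2^{\mathrm{Ch}})\Bigr),
\]
so that the bracket becomes identically $\Gamma_{\lambda,\mu}(M,J,[g])$ and hence $\lambda\tilde S_1^{\mathrm{Ch}}+\mu\tilde S_2^{\mathrm{Ch}}=e^{-2u}\,\Gamma_{\lambda,\mu}(M,J,[g])$ has exactly the sign of $\Gamma_{\lambda,\mu}(M,J,[g])$ at every point. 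This disposes of the positive, negative, and identically-zero cases simultaneously.

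It remains to justify solvability, and this is where the Gauduchon condition is decisive. By (3.5), $\Delta_{\hat g}^{\mathrm{Ch}}u=\Delta_{\hat g}u+\langle\alpha_{\hat F},du\rangle$ is a second-order elliptic operator with no zeroth-order term, so on the compact connected $M$ its kernel is the constants by the strong maximum principle. Its formal $L^2(dV_{\hat g})$-adjoint is $\Delta_{\hat g}v-\langle\alpha_{\hat F},dv\rangle+(\delta^{\hat g}\alpha_{\hat F})v$, which for the Gauduchon metric ($\delta^{\hat g}\alpha_{\hat F}=0$) reduces to $\Delta_{\hat g}v-\langle\alpha_{\hat F},dv\rangle$ and likewise has only constants in its kernel. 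Thus $\Delta_{\hat g}^{\mathrm{Ch}}$ has index zero, and by the Fredholm alternative the equation $\Delta_{\hat g}^{\mathrm{Ch}}u=h$ is solvable precisely when $\int_M h\,dV_{\hat g}=0$. For the right-hand side above this integral is $\tfrac{1}{n\lambda+\mu}\bigl(\Gamma_{\lambda,\mu}(M,J,[g])-\int_M(\lambda\hat S_1^{\mathrm{Ch}}+\mu\hat S_2^{\mathrm{Ch}})\,dV_{\hat g}\bigr)$, which vanishes by the very definition (3.6) of $\Gamma_{\lambda,\mu}$ together with the unit-volume normalization of $\hat g$. Elliptic regularity then gives a smooth solution $u$, producing the desired $\tilde g=e^{2u}\hat g$.

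The main obstacle I anticipate is the first step: pinning down the two transformation laws with the exact Chern-Laplacian coefficients $n$ and $1$. The law for $S_1^{\mathrm{Ch}}$ is essentially Gauduchon's and is clean because it descends from the curvature of the canonical line bundle, but the law for $S_2^{\mathrm{Ch}}$ demands careful bookkeeping of the torsion and Lee-form contributions in the genuinely non-integrable setting, where $N^0$ and $(dF)^-$ are present; the difference identity against (3.2)--(3.3) noted above is the check that makes me confident the combined coefficient is exactly $n\lambda+\mu$. A secondary technical point worth flagging is that $\Delta_{\hat g}^{\mathrm{Ch}}$ is \emph{not} self-adjoint, so solvability must be argued through the index-zero/maximum-principle route rather than a spectral decomposition.
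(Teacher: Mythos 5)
Your proposal is correct and follows essentially the same route as the paper: pass to the unit-volume Gauduchon representative, use the conformal transformation laws $e^{2f}\tilde S_1^{\mathrm{Ch}}=\hat S_1^{\mathrm{Ch}}+n\Delta^{\mathrm{Ch}}_{\hat g}f$ and $e^{2f}\tilde S_2^{\mathrm{Ch}}=\hat S_2^{\mathrm{Ch}}+\Delta^{\mathrm{Ch}}_{\hat g}f$ (which the paper simply cites from Lejmi--Upmeier and Li--Zhou--Zhou rather than rederiving), and solve $(n\lambda+\mu)\Delta^{\mathrm{Ch}}_{\hat g}f=\Gamma_{\lambda,\mu}-(\lambda\hat S_1^{\mathrm{Ch}}+\mu\hat S_2^{\mathrm{Ch}})$ via the Fredholm alternative, the Gauduchon condition $\delta^{\hat g}\alpha_{\hat F}=0$ forcing the cokernel to be the constants. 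Your index-zero/adjoint-kernel argument is exactly the paper's orthogonal decomposition $C^\infty(M)=\mathrm{Ker}(\Delta^{\mathrm{Ch}}_{\hat g})^{\ast}\oplus\mathrm{Im}\,\Delta^{\mathrm{Ch}}_{\hat g}$ in different words.
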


\begin{proof}
 From Theorem 3.1, for convenience, we can assume that the background metric $g$ is just the Gauduchon metric $\hat{g}$ with unit volume. By direct calculations, the adjoint
of the Chern laplacian $\Delta_{\hat{g}}^{\mathrm{Ch}}$ on smooth function $f$ is
\begin{equation}
(\Delta_{\hat{g}}^{\mathrm{Ch}})^{\ast} f=\Delta_{\hat{g}} f-\langle \alpha_{\hat{F}}, df \rangle_{\hat{g}},
\end{equation}
where $\alpha_{\hat{F}}$ denotes the Lee form of the Gauduchon metric $\hat{F}$.

Notice that $\mathrm{Ker}(\Delta_{\hat{g}}^{\mathrm{Ch}})^{\ast}$ are the constant functions. Indeed, take $f\in \mathrm{Ker}(\Delta_{\hat{g}}^{\mathrm{Ch}})^{\ast}$, one has
\begin{equation}
0=\int_M f (\Delta_{\hat{g}}^{\mathrm{Ch}})^{\ast} f dV_{\hat{g}}=\int_M \left(|\nabla f|_{\hat{g}}^2
-\frac{1}{2}\langle d (f^2), \alpha_{\hat{F}}\rangle_{\hat{g}}\right)dV_{\hat{g}}=\int_M |\nabla f|_{\hat{g}}^2 dV_{\hat{g}}
\end{equation}
since $\delta^{\hat{g}}\alpha_{\hat{F}}=0$.

For $\Delta_{\hat{g}}^{\mathrm{Ch}}$
and $(\Delta_{\hat{g}}^{\mathrm{Ch}})^{\ast}$ are elliptic operators on a compact manifold, we have the following orthogonal decomposition of
the space of smooth functions,
\begin{equation}
C^{\infty}(M)=\mathrm{Ker}(\Delta_{\hat{g}}^{\mathrm{Ch}})^{\ast}\oplus \mathrm{Im} \Delta_{\hat{g}}^{\mathrm{Ch}}.
\end{equation}
Hence  if $n\lambda + \mu \ne 0$, the equation
\begin{equation}
(n\lambda + \mu) \Delta_{\hat{g}}^{\mathrm{Ch}} f =\Gamma_{\lambda,~\mu}(M,J,[g])-
(\lambda \hat{S}_1^{\mathrm{Ch}}+\mu \hat{S}_2^{\mathrm{Ch}})
\end{equation}
is solvable for $f$, since the right hand side is orthogonal to the constant. Now, define $\tilde{g}=e^{2f}\hat{g}$,  then the
conformal variations of two Chern scalar curvatures are\cite{LeU, LZZ}
\begin{equation}
e^{2f}\tilde{S}_1^{\mathrm{Ch}}=\hat{S}_1^{\mathrm{Ch}}+n \Delta_{\hat{g}}^{\mathrm{Ch}}f
\end{equation}
and
\begin{equation}
e^{2f}\tilde{S}_2^{\mathrm{Ch}}=\hat{S}_2^{\mathrm{Ch}}+ \Delta_{\hat{g}}^{\mathrm{Ch}}f.
\end{equation}
From (3.11),(3.12) and (3.10), we have
\begin{equation}
\begin{aligned}
\lambda \tilde{S}_1^{\mathrm{Ch}}+\mu \tilde{S}_2^{\mathrm{Ch}}
&=e^{-2f}\left(\lambda \hat{S}_1^{\mathrm{Ch}}+\mu \hat{S}_2^{\mathrm{Ch}}+(n\lambda + \mu)\Delta_{\hat{g}}^{\mathrm{Ch}}f\right)\\
&=e^{-2f}\Gamma_{\lambda,~\mu}(M,J,[g]),
\end{aligned}
\end{equation}
i.e., the mixed Chern scalar curvature $\lambda \tilde{S}_1^{\mathrm{Ch}} + \mu \tilde{S}_2^{\mathrm{Ch}}$ of the metric $\tilde{g}=e^{2f}\hat{g}$ has the same sign as $\Gamma_{\lambda,\mu}(M,J,[g])$.
\end{proof}

\begin{rem}
If $\lambda=1,~\mu=0$,  this case is due to Lejmi-Upmeier\cite[Proposition 5.7]{LeU}. On the other hand, if $n\lambda + \mu =0$, one has
\begin{equation*}
\lambda \tilde{S}_1^{\mathrm{Ch}}+\mu \tilde{S}_2^{\mathrm{Ch}}=e^{-2f}(\lambda \hat{S}_1^{\mathrm{Ch}}+\mu \hat{S}_2^{\mathrm{Ch}}),
\end{equation*}
i.e., $\lambda \tilde{S}_1^{\mathrm{Ch}}+\mu \tilde{S}_2^{\mathrm{Ch}}$ has the same sign for every metric in the conformal class $[g]$.
\end{rem}

In the last part of this section, following Chen-Zhang\cite{ChZ1}, we recall briefly the definition of Kodaira dimension of  a
compact almost complex manifold $(M,J)$. Let $\Omega^{p,q}(M)=\Gamma(M, \Lambda^{p,q}M)$ be the space of $(p,q)$-forms, then the exterior differential $d$
splits as
\begin{equation*}
d \Omega^{p,q}(M)\subset \Omega^{p+2,q-1}(M)+\Omega^{p+1,q}(M)+\Omega^{p,q+1}(M)+\Omega^{p-1,q+2}(M),
\end{equation*}
i.e., $d=N_J+\partial+\bar{\partial}+\bar{N}_J$, where $N_J=\pi^{p+2,q-1}\circ d$, $\bar{\partial}=\pi^{p,q+1}\circ d$, and the notation $\pi^{p,q}$ is the projection onto the $(p,q)$-forms.

Let $K_M=\Lambda^{n,0}M$ be the canonical line bundle, then the $\bar{\partial}$-operator gives rise to a pseudoholomorphic structure
on $K_M$, i.e., a differential operator still denoted by $\bar{\partial}$,
\begin{equation*}
\bar{\partial}: \Gamma(M, K_M)\longrightarrow \Gamma(M, \Lambda^{0,1}M \otimes K_M),
\end{equation*}
satisfying the Leibniz rule
\begin{equation*}
\bar{\partial}(f\sigma)=\bar{\partial}f\otimes \sigma+f\bar{\partial}\sigma
\end{equation*}
for every smooth function $f$ and section $\sigma$.

Inductively, we can extend the $\bar{\partial}$-operator to get a natural pseudoholomorphic structure (denoted by $\bar{\partial}_m$) on the
pluricanonical bundle $K_M^{\otimes m}$, for any  $m\geq2$. Meanwhile, for any $m$, the Hermitian metric $g$ induces  a Hermitian structure $h$ on the complex line bundle $K_M^{\otimes m}$. It should also be noted that the Chern connection $D$ of $(M,J,g)$ induces the unique Hermitian connection on  the Hermitian line bundle $(K_M^{\otimes m}, \bar{\partial}_m, h)$.

The space of pseudoholomorphic sections of $K_M^{\otimes m}$ is defined as
\begin{equation*}
H^0(M, K_M^{\otimes m})=\{\sigma\in\Gamma(M, K_M^{\otimes m}) \mid \bar{\partial}_m \sigma=0\}.
\end{equation*}
For a compact almost complex manifold $(M,J)$, using  Hodge theory,  $H^0(M, K_M^{\otimes m})$ is a finite dimensional complex vector space for every $m \geq 1$.
\begin{defin}
The $m$\textsuperscript{th}-plurigenus of $(M,J)$ is defined by
\begin{equation*}
P_m(M,J)=\dim_{\mathbb{C}} H^0(M, K_M^{\otimes m}).
\end{equation*}
The Kodaira dimension of $(M,J)$ is defined by
\begin{equation*}
\kappa(M, J)=
\begin{cases}
-\infty, & \text{if } P_m(M,J) = 0 \text{ for every } m \geq 1, \\
\limsup\limits_{m \to +\infty} \dfrac{\log P_m(M,J)}{\log m}, & \text{otherwise}.
\end{cases}
\end{equation*}
\end{defin}

If the almost complex structure $J$ is integrable, then $\bar{\partial}_m\circ\bar{\partial}_m=0$, the above definition is
the same as the famous Kodaira dimension of a complex manifold. Determining the Kodaira dimension of an almost complex manifold is an interesting yet extremely challenging problem. We refer readers to \cite{ChZ1, CNT} for computations on specific examples.

\section{The case of nonnegative Riemannian scalar curvature}

\begin{thm}
Let $(M,J,g)$ be a compact almost Hermitian manifold of complex dimension $n\geq 3$. If
$(M,J,g)\in \mathcal{W}_2\oplus\mathcal{W}_3\oplus \mathcal{W}_4$ and its Riemannian scalar curvature $s\geq0$, then either

(1) $\kappa(M, J)=-\infty$; or

(2) $\kappa(M, J)=0$, and $(M,J,g)$ is a K\"{a}hler Calabi-Yau manifold.

\end{thm}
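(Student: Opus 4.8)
The plan is to pin down the sign of the conformal invariant $\Gamma_{1,0}(M,J,[g])$---which, by the remark following (3.6), is the degree of $K_M^{-1}$ computed with the Gauduchon metric $\hat g\in[g]$---and then branch on whether it is positive or zero. Since membership in $\mathcal{W}_2\oplus\mathcal{W}_3\oplus\mathcal{W}_4$ means $(dF)^-=0$, formula (3.2) evaluated on $\hat g$ (for which $\delta^{\hat g}\alpha_{\hat F}=0$) collapses to the pointwise identity
\begin{equation*}
\hat S_1^{\mathrm{Ch}}=\frac{\hat s}{2}+\frac{1}{16}|N^0|_{\hat g}^2+\frac14|(d\hat F)_0^+|_{\hat g}^2+\frac{1}{4(n-1)}|\alpha_{\hat F}|_{\hat g}^2,
\end{equation*}
so that $\Gamma_{1,0}=\int_M\hat S_1^{\mathrm{Ch}}\,dV_{\hat g}\ge\frac12\int_M\hat s\,dV_{\hat g}$. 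The first thing to establish is $\int_M\hat s\,dV_{\hat g}\ge0$. Although $s\ge0$ is assumed only for $g$, writing $\hat g=u^{2/(n-1)}g$ and invoking the conformal Laplacian identity in real dimension $2n\ge6$,
\begin{equation*}
\int_M\hat s\,dV_{\hat g}=\int_M\Bigl(\tfrac{4(2n-1)}{2n-2}\,|\nabla u|_g^2+s\,u^2\Bigr)dV_g,
\end{equation*}
the hypothesis $s\ge0$ forces the right-hand side to be nonnegative; hence $\Gamma_{1,0}\ge0$. (This is where $n\ge3$ enters.)

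Suppose first $\Gamma_{1,0}>0$. A nonzero pseudoholomorphic section $\sigma$ of $K_M^{\otimes m}$ would, by the Poincar\'e--Lelong identity on the Gauduchon manifold, satisfy $m\,\Gamma_{1,0}=-2\pi\,\mathrm{Vol}_{\hat g}(Z_\sigma)\le0$, contradicting $\Gamma_{1,0}>0$. Thus $P_m(M,J)=0$ for every $m\ge1$ and $\kappa(M,J)=-\infty$, which is alternative (1). (Equivalently, one may first apply Proposition 3.2 with $(\lambda,\mu)=(1,0)$ to obtain a conformal metric with $\tilde S_1^{\mathrm{Ch}}>0$ everywhere, and then run a Bochner vanishing argument on $K_M^{\otimes m}$.)

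Now suppose $\Gamma_{1,0}=0$. Then both nonnegative contributions above must vanish. From the conformal identity, $\int_M\hat s\,dV_{\hat g}=0$ together with $s\ge0$ forces $u$ to be constant and $s\equiv0$, so $\hat g$ is homothetic to $g$ and $g$ is scalar-flat; simultaneously $N^0=0$, $(dF)_0^+=0$ and $\alpha_F=0$. The vanishing of $N^0$ (with $(dF)^-=0$) makes $J$ integrable, while substituting $(dF)_0^+=0=\alpha_F$ into (2.2) and using $(dF)^-=0$ gives $dF=0$; hence $(M,J,g)$ is a scalar-flat \emph{K\"ahler} manifold, and in particular the structure is now genuinely complex. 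If $P_m(M,J)=0$ for all $m$, we are again in alternative (1). Otherwise a nonzero holomorphic section $\sigma$ of some $K_M^{\otimes m}$ exists; since $\Gamma_{1,0}=0$, the same Poincar\'e--Lelong identity yields $\mathrm{Vol}_{\hat g}(Z_\sigma)=0$, so $\sigma$ is nowhere vanishing, $K_M^{\otimes m}$ is holomorphically trivial, and $c_1(M,J)=0$. A scalar-flat K\"ahler metric with $c_1=0$ is Ricci-flat---its Ricci form is $\partial\bar\partial$-exact with vanishing trace, hence zero---so $(M,J,g)$ is K\"ahler Calabi--Yau; triviality of $K_M^{\otimes m}$ caps every plurigenus at $1$, giving $\kappa(M,J)=0$, which is alternative (2).

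The step I expect to be the main obstacle is the vanishing in the case $\Gamma_{1,0}>0$, where $J$ need not be integrable: one must rule out nonzero pseudoholomorphic sections of a line bundle whose $\Gamma_{1,0}$-sign is positive. The clean Poincar\'e--Lelong identity used above (equivalently, the Bochner--Kodaira--Nakano vanishing) relies on $\bar\partial^2=0$, whereas here $\bar\partial^2$ is governed by the Nijenhuis tensor. The role of the hypothesis $(dF)^-=0$ is precisely to suppress the $(3,0)+(0,3)$ torsion that would otherwise spoil the sign in the relevant Weitzenb\"ock formula for the Chern connection on $K_M^{\otimes m}$; carrying out that computation so that the curvature term $m\,S_1^{\mathrm{Ch}}|\sigma|^2$ dominates the surviving torsion corrections is the technical heart of the argument.
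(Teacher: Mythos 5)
Your overall strategy coincides with the paper's: reduce everything to the sign of $\int_M\hat S_1^{\mathrm{Ch}}\,dV_{\hat g}$ for the Gauduchon representative, show it is nonnegative under the hypotheses, and branch on whether it is positive or zero. You establish the nonnegativity differently but equivalently: you evaluate (3.2) directly on $\hat g$ (where $\delta^{\hat g}\alpha_{\hat F}=0$ kills the divergence term) and control $\int_M\hat s\,dV_{\hat g}$ by the conformal-Laplacian identity, whereas the paper keeps the formula in terms of $g$ and computes $\int_M e^{-2f}\delta^g\alpha_F\,dV_{\hat g}=-4(n-1)^2\int_M|df|^2_{\hat g}\,dV_{\hat g}$ by hand; both routes give the same inequality and the same rigidity in the equality case ($s\equiv0$, $N^0=0$, $(dF)^+=0$, hence scalar-flat K\"ahler). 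Your treatment of the case $\Gamma_{1,0}=0$ (triviality of $K_M^{\otimes m}$ via Poincar\'e--Lelong, then Ricci-flatness of $g$ from the $\partial\bar\partial$-lemma) is legitimate because $J$ is already integrable at that point, and is if anything more direct than the paper's detour through Yang's results and the Calabi--Yau theorem.

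The one step where your primary argument does not go through is the vanishing when $\Gamma_{1,0}>0$: the Poincar\'e--Lelong identity concerns divisors of holomorphic sections and has no counterpart for pseudoholomorphic sections when the Nijenhuis tensor is nonzero, so it cannot carry the almost complex case. Your parenthetical fallback is the correct argument and is exactly the paper's: Proposition 3.2 with $(\lambda,\mu)=(1,0)$ produces $\tilde g\in[g]$ with $\tilde S_1^{\mathrm{Ch}}>0$ pointwise, and the Bochner identity $-\Delta^{\mathrm{Ch}}_{\tilde g}|\sigma|^2_{\tilde g}=2m\tilde S_1^{\mathrm{Ch}}|\sigma|^2_{\tilde g}+2|\tilde D\sigma|^2_{\tilde g}$ together with the maximum principle kills every plurigenus. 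Note, however, that your closing diagnosis of the ``technical heart'' is off: this Bochner identity (the paper's (4.8)) holds exactly on an arbitrary compact almost Hermitian manifold, with no surviving torsion corrections to dominate --- it is reused verbatim for surfaces in Corollary 4.2. The hypothesis $(dF)^-=0$ is consumed entirely in formula (3.2), where it removes the negatively-signed term $-\tfrac{5}{12}|(dF)^-|^2$ that would otherwise break the implication $s\geq0\Rightarrow\int_M\hat S_1^{\mathrm{Ch}}\,dV_{\hat g}\geq0$. (Likewise, $n\geq3$ is needed only for the Gray--Hervella classification, not for the conformal-Laplacian identity, which works equally well in real dimension four.)
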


\begin{proof}
We need to determine the sign of the total Chern scalar curvature of the Gauduchon metric $\hat{g}=e^{2f} g$ in the conformal class
$[g]$. From the formula of conformal deformation (3.11), one has
\begin{equation}\label{f1}
\begin{aligned}
\int_M \hat{S}_1^{\mathrm{Ch}}dV_{\hat{g}}
&=\int_M  e^{-2f}\bigl(S_1^{\mathrm{Ch}}+n\Delta_g^{\mathrm{Ch}}f\bigr)dV_{\hat{g}}\\
&=\int_M e^{-2f}S_1^{\mathrm{Ch}}dV_{\hat{g}}.
\end{aligned}
\end{equation}
The last equality is from the following two facts: the relation of the Chern Laplacians  for the metrics $g$ and $\hat{g}$,
\begin{equation}
\Delta_{\hat{g}}^{\mathrm{Ch}} v=e^{-2f}\Delta_{g}^{\mathrm{Ch}}v;
\end{equation}
with respect to the Gauduchon metric $\hat{g}$, the integral
\begin{equation}
\int_M \Delta_{\hat{g}}^{\mathrm{Ch}} v dV_{\hat{g}}=0
\end{equation}
for any smooth function $v$ on $M$.

Together with the formula (3.2), (\ref{f1}) can be refined as follows,
\begin{equation}\label{f2}
\begin{aligned}
\int_M \hat{S}_1^{\mathrm{Ch}}dV_{\hat{g}}
&=\int_M  e^{-2f}\left(\frac{1}{2}s-\frac{5}{12}|(dF)^-|^2+\frac{1}{16}|N^0|^2+\frac{1}{4}|(dF)^+|^2
-\frac{1}{2}\delta^g \alpha_F \right)dV_{\hat{g}}\\
&=\int_M  e^{-2f}\left(\frac{1}{2}s+\frac{1}{16}|N^0|^2+\frac{1}{4}|(dF)^+|^2
-\frac{1}{2}\delta^g \alpha_F \right)dV_{\hat{g}}
\end{aligned}
\end{equation}
for $(M,J,g)\in \mathcal{W}_2\oplus\mathcal{W}_3\oplus \mathcal{W}_4$: $(dF)^-=0$.

For the conformal metric $\hat{g}=e^{2f} g$, the corresponding Lee forms $\alpha_F$ and $\alpha_{\hat{F}}$ are related by
\begin{equation*}
\alpha_{\hat{F}}=\alpha_F+2(n-1)df.
\end{equation*}
From Besse \cite[Theorem 1.159]{Bes}, applying the codifferentials $\delta^{\hat{g}}$ and $\delta^g$ to the Lee form $\alpha_F$ yields

\begin{equation}
\delta^{\hat{g}}\alpha_F=e^{-2f} \left(\delta^g \alpha_F-(2n-2)\langle df, \alpha_F \rangle \right).
\end{equation}
Then
\begin{equation}\label{f3}
\begin{aligned}
\int_M  e^{-2f} \delta^g \alpha_F dV_{\hat{g}}
&=\int_M  \left(\delta^{\hat{g}}\alpha_F+e^{-2f}(2n-2)\langle df, \alpha_F \rangle \right)dV_{\hat{g}}\\
&=(2n-2)\int_M \langle df, \alpha_{\hat{F}}-(2n-2)df \rangle_{\hat{g}}dV_{\hat{g}}\\
&=-4(n-1)^2\int_M \langle df,df\rangle_{\hat{g}}dV_{\hat{g}}.
\end{aligned}
\end{equation}

From (\ref{f2}) and (\ref{f3}), we have
\begin{equation}\label{f4}
\begin{aligned}
\int_M \hat{S}_1^{\mathrm{Ch}}dV_{\hat{g}}
=&\int_M  e^{-2f}\left(\frac{1}{2}s+\frac{1}{16}|N^0|^2+\frac{1}{4}|(dF)^+|^2\right)dV_{\hat{g}}\\
&+2(n-1)^2\int_M \langle df,df\rangle_{\hat{g}}dV_{\hat{g}}\geq 0
\end{aligned}
\end{equation}
if the Riemannian scalar curvature $s\geq0$.

If $\int_M \hat{S}_1^{\mathrm{Ch}}dV_{\hat{g}}>0$, using the case: $\lambda=1, \mu=0$ of Proposition 3.2, there exists a metric
$\tilde{g}$ in the conformal class $[g]$ such that its Chern scalar curvature $\tilde{S}_1^{\mathrm{Ch}}>0$. Now, for any pseudoholomorphic section $\sigma$ of $K_M^{\otimes m}$, by the standard Bochner formula with respect to the metric $\tilde{g}$, one has
\begin{equation}\label{f5}
-\Delta_{\tilde{g}}^{\mathrm{Ch}} |\sigma|_{\tilde{g}}^2=2m\tilde{S}_1^{\mathrm{Ch}}|\sigma|_{\tilde{g}}^2+2|\tilde{D}\sigma|_{\tilde{g}}^2.
\end{equation}
Then from the maximum principle, $K_M^{\otimes m}$ has no nontrivial  pseudoholomorphic sections, i.e.,
the $m$\textsuperscript{th}-plurigenus
$P_m(M,J)=0$. Furthermore, the Kodaira dimension $\kappa(M, J)=-\infty$.

If $\int_M \hat{S}_1^{\mathrm{Ch}}dV_{\hat{g}}=0$, from (\ref{f4}), then the Riemannian scalar curvature $s=0$, $N^0=0$ and $(dF)^+=0$. Equivalently, $(M,J,g)$ is a K\"{a}hler manifold with zero Riemannian scalar curvature. Using formula (\ref{f5}) again, one has $P_m(M,J)\leq 1$ for any $m$. From Corollary 1.6 in \cite{Yang2} or Theorem 1.2 in \cite{Yang3}, we have  $\kappa(M, J)=-\infty$, or  $\kappa(M, J)=0$.
Moreover, when $\kappa(M, J)=0$, $(M,J)$ admits a K\"{a}hler Calabi-Yau structure. By the celebrated Calabi-Yau theorem \cite{Yau2}, there exists a K\"{a}hler Ricci-flat metric $g_{CY}$, i.e., $\rho_{g_{CY}}=0$.  The Chern-Ricci forms satisfy
\begin{equation}
\rho_{g} = \rho_{g} - \rho_{g_{CY}} =\sqrt{-1}\partial\bar{\partial}\log \left(\frac{dV_{g_{CY}}}{dV_{g}}\right).
\end{equation}
Since the metric $g$ has zero Riemannian scalar curvature, then
\begin{equation}
-\Delta_g^{\mathrm{Ch}}\log \left(\frac{dV_{g_{CY}}}{dV_{g}}\right)
=2\langle\sqrt{-1}\partial\bar{\partial}\log \left(\frac{dV_{g_{CY}}}{dV_{g}}\right),F\rangle=2\langle\rho_{g}, F\rangle=0,
\end{equation}
which implies $\log \left(\frac{dV_{g_{CY}}}{dV_{g}}\right)$ is a constant, thus $\rho_{g}=0$. This completes the proof.
\end{proof}

For almost Hermitian surface, the components of $\nabla F$ satisfy $(dF)^-=(dF)_0^+=0$ and $N^0=N$. Then as in the proof of the above Theorem, we have
\begin{cor}
Let $(M,J,g)$ be a compact almost Hermitian surface with nonnegative Riemannian scalar curvature. Then
either

(1) $\kappa(M, J)=-\infty$; or

(2) $\kappa(M, J)=0$, and $(M,J,g)$ is a K\"{a}hler Calabi-Yau surface.

\end{cor}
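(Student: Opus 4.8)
The plan is to run the proof of Theorem 4.1 essentially verbatim, the key observation being that for an almost Hermitian surface the defining condition of the class $\mathcal{W}_2\oplus\mathcal{W}_3\oplus\mathcal{W}_4$ holds automatically. Indeed, as recorded just above the statement, on a surface one has $(dF)^-=(dF)_0^+=0$ and $N^0=N$, so that the class restriction of Theorem 4.1 becomes vacuous and the argument needs only $s\geq 0$. First I would invoke Theorem 3.1 to pass to the Gauduchon representative $\hat g=e^{2f}g$ of the conformal class $[g]$, and then compute the total Chern scalar curvature $\int_M \hat S_1^{\mathrm{Ch}}\,dV_{\hat g}$ along the lines of (\ref{f1})--(\ref{f4}) using the conformal variation (3.11) and the formula (3.2) for $S_1^{\mathrm{Ch}}$, specialized to $n=2$.

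Substituting $(dF)^-=(dF)_0^+=0$, $N^0=N$, and $n-1=1$ into the surface version of (\ref{f4}) gives
\begin{equation*}
\int_M \hat{S}_1^{\mathrm{Ch}}\,dV_{\hat{g}}
=\int_M e^{-2f}\Bigl(\tfrac{1}{2}s+\tfrac{1}{16}|N|^2+\tfrac{1}{4}|(dF)^+|^2\Bigr)\,dV_{\hat{g}}
+2\int_M |df|_{\hat{g}}^2\,dV_{\hat{g}}\geq 0,
\end{equation*}
the final integral carrying the constant $2(n-1)^2=2$. If this quantity is strictly positive, then Proposition 3.2 with $(\lambda,\mu)=(1,0)$ furnishes a metric $\tilde g\in[g]$ with $\tilde S_1^{\mathrm{Ch}}>0$; plugging $\tilde g$ into the Bochner identity (\ref{f5}) and applying the maximum principle forces every pseudoholomorphic section of $K_M^{\otimes m}$ to vanish, so $P_m(M,J)=0$ for all $m\geq 1$ and $\kappa(M,J)=-\infty$.

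It remains to treat the equality case $\int_M \hat S_1^{\mathrm{Ch}}\,dV_{\hat g}=0$. As each summand above is nonnegative, equality forces $s\equiv 0$, $N\equiv 0$, and $(dF)^+\equiv 0$; together with the automatic $(dF)^-=0$ this yields $dF=0$, while $N\equiv 0$ gives integrability, so $(M,J,g)$ is a K\"{a}hler surface of vanishing Riemannian scalar curvature. A second application of (\ref{f5}) then shows $P_m(M,J)\leq 1$, and \cite[Corollary 1.6]{Yang2} (or \cite[Theorem 1.2]{Yang3}) yields $\kappa(M,J)\in\{-\infty,0\}$; in the case $\kappa(M,J)=0$ the Calabi-Yau theorem \cite{Yau2} produces a Ricci-flat representative and the argument concluding the proof of Theorem 4.1 gives $\rho_g=0$, so $(M,J,g)$ is K\"{a}hler Calabi-Yau. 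The only delicate point is the dimensional bookkeeping: one must confirm that the surface identities $(dF)^-=(dF)_0^+=0$ and $N^0=N$ genuinely make the Gray-Hervella hypothesis redundant and that the constants $\tfrac{1}{4}|(dF)^+|^2$ and $2(n-1)^2$ specialize correctly, after which the entire argument carries over unchanged.
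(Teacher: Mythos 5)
Your proposal is correct and matches the paper exactly: the paper's proof of this corollary consists precisely of the observation that for an almost Hermitian surface $(dF)^-=(dF)_0^+=0$ and $N^0=N$, after which the argument of Theorem 4.1 is applied verbatim. Your dimensional bookkeeping (the constant $2(n-1)^2=2$ and the identification $\frac14|(dF)^+|^2=\frac14|\alpha_F|^2$ when $n=2$, so that the equality case forces $\alpha_F=0$ and hence $dF=0$) is the same check the paper implicitly performs.
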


\begin{exa}
The twistor construction provides a large class of non-K\"{a}hler manifolds of complex dimension 3, building upon Hitchin's classification of K\"{a}hler twistor spaces \cite{Hit} and Taubes' fundamental results on the existence of anti-self-dual conformal structures \cite{Tau}.
The twistor space $\mathbf{Z}$ of a compact anti-self-dual 4-manifold admits both the Atiyah-Hitchin-Singer complex structure $\mathbb{J}_+$ \cite{AHS} and the Eells-Salamon almost complex structure $\mathbb{J}_-$ \cite{ES}. It is well-known that $(\mathbf{Z},\mathbb{J}_+)$ is uniruled and has Kodaira dimension $\kappa(\mathbf{Z},\mathbb{J}_+)=-\infty$. For the almost complex structure $\mathbb{J}_-$, we prove that the Kodaira dimension $\kappa(\mathbf{Z},\mathbb{J}_-)=0$ in the appendix. Furthermore, for a compact anti-self-dual  Einstein 4-manifold with negative scalar curvature, the corresponding twistor space $(\mathbf{Z}, \mathbb{J}_{\pm})$ is non-K\"{a}hler, but admits balanced metrics with positive, negative and zero Riemannian scalar curvature, respectively.
\end{exa}

\section{The case of a nonnegative mixed scalar curvature}

\begin{thm}
Let $(M,J,g)$ be a compact Hermitian manifold of complex dimension $n$. If its Riemannian scalar curvature $s$ and $J$-scalar
curvature $s_J$ satisfy $3s_J+s\geq 0$, then either

(1) $\kappa(M, J)=-\infty$; or

(2) $\kappa(M, J)=0$, and $(M,J,g)$ is a K\"{a}hler Calabi-Yau manifold.

\end{thm}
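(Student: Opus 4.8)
The plan is to run the argument of Theorem 4.1 almost verbatim, with the hypothesis $s\ge 0$ replaced by $3s_J+s\ge 0$ and with the role of the Chern scalar curvature $S_1^{\mathrm{Ch}}$ played, at the pointwise level, by the sum $S_1^{\mathrm{Ch}}+S_2^{\mathrm{Ch}}$. First I would specialize the three basic identities (2.7), (3.2) and (3.3) to the Hermitian case $(dF)^-=N^0=0$ and form their combination. A short computation should yield the pointwise identity
\begin{equation*}
S_1^{\mathrm{Ch}}+S_2^{\mathrm{Ch}}=\frac{1}{4}\bigl(3s_J+s\bigr)+\frac{1}{2}\bigl|(dF)_0^+\bigr|^2+\Bigl(\frac{1}{2(n-1)}+\frac{1}{4}\Bigr)|\alpha_F|^2 .
\end{equation*}
The decisive feature is that the codifferential contributions $\delta^g\alpha_F$ present in $S_1^{\mathrm{Ch}}$, $S_2^{\mathrm{Ch}}$ and $s_J$ cancel exactly; in fact the constant $3$ in the hypothesis is the unique one making $\delta^g\alpha_F$ disappear from $S_1^{\mathrm{Ch}}+S_2^{\mathrm{Ch}}-\frac{1}{4}(3s_J+s)$, which is precisely why it appears in the statement. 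Since the two torsion terms on the right are manifestly nonnegative for every $n\ge 2$, the assumption $3s_J+s\ge 0$ forces $S_1^{\mathrm{Ch}}+S_2^{\mathrm{Ch}}\ge 0$ pointwise with respect to $g$, and no extra restriction on $n$ is required.

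Next I would transport this inequality to the Gauduchon metric $\hat g=e^{2f}g$ of the conformal class $[g]$. Exactly as in the proof of Theorem 4.1, the conformal variation formulas (3.11)--(3.12), together with the relations $\Delta_{\hat g}^{\mathrm{Ch}}v=e^{-2f}\Delta_g^{\mathrm{Ch}}v$ and $\int_M\Delta_{\hat g}^{\mathrm{Ch}}v\,dV_{\hat g}=0$, make the Laplacian terms integrate away, giving
\begin{equation*}
\Gamma_{1,1}(M,J,[g])=\int_M\bigl(\hat S_1^{\mathrm{Ch}}+\hat S_2^{\mathrm{Ch}}\bigr)\,dV_{\hat g}=\int_M e^{-2f}\bigl(S_1^{\mathrm{Ch}}+S_2^{\mathrm{Ch}}\bigr)\,dV_{\hat g}\ge 0 .
\end{equation*}
Subtracting (3.3) from (3.2) and using that the Gauduchon metric satisfies $\delta^{\hat g}\alpha_{\hat F}=0$ yields $\hat S_1^{\mathrm{Ch}}-\hat S_2^{\mathrm{Ch}}=\frac{1}{2}|\alpha_{\hat F}|^2$ pointwise, so that
\begin{equation*}
\int_M\hat S_1^{\mathrm{Ch}}\,dV_{\hat g}=\frac{1}{2}\,\Gamma_{1,1}(M,J,[g])+\frac{1}{4}\int_M|\alpha_{\hat F}|^2\,dV_{\hat g}\ge 0 .
\end{equation*}
Thus the anti-canonical degree $\Gamma_{1,0}=\int_M\hat S_1^{\mathrm{Ch}}\,dV_{\hat g}$ is nonnegative. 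Routing the estimate through $\Gamma_{1,1}$ is essential here: working with $S_1^{\mathrm{Ch}}$ alone reintroduces, after conformal integration, an uncontrolled $\delta^g\alpha_F$ term of the wrong sign.

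The endgame then copies Theorem 4.1. If $\int_M\hat S_1^{\mathrm{Ch}}\,dV_{\hat g}>0$, Proposition 3.2 with $\lambda=1,\mu=0$ supplies a conformal metric $\tilde g\in[g]$ with $\tilde S_1^{\mathrm{Ch}}>0$; the Bochner formula \eqref{f5} and the maximum principle then force every pseudoholomorphic section of every $K_M^{\otimes m}$ to vanish, so $P_m(M,J)=0$ for all $m$ and $\kappa(M,J)=-\infty$. If instead the degree vanishes, then both $\Gamma_{1,1}(M,J,[g])$ and $\int_M|\alpha_{\hat F}|^2\,dV_{\hat g}$ vanish; the pointwise identity then forces $3s_J+s=0$, $(dF)_0^+=0$ and $\alpha_F=0$ with respect to $g$, which combined with $(dF)^-=N^0=0$ gives $dF=0$, i.e. $g$ is K\"ahler, and hence $s=0$. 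From here I would argue verbatim as in Theorem 4.1: the Bochner formula yields $P_m(M,J)\le 1$, the results of Yang \cite{Yang2,Yang3} give $\kappa(M,J)=-\infty$ or $\kappa(M,J)=0$, and in the latter case the Calabi-Yau theorem upgrades $(M,J,g)$ to a K\"ahler Calabi-Yau manifold.

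The only genuinely new input is the pointwise identity of the first step, and its single delicate point is the bookkeeping of the codifferential terms: one must check that the $\delta^g\alpha_F$ contributions cancel and that the surviving coefficients of $|(dF)_0^+|^2$ and $|\alpha_F|^2$ are nonnegative. Everything downstream is a direct transcription of the proof of Theorem 4.1, so the main obstacle is purely the correct assembly of (2.7), (3.2) and (3.3); once the cancellation is in place, the conformal transport to the Gauduchon metric and the Bochner/maximum-principle dichotomy proceed exactly as before.
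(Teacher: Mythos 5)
Your proposal is correct and follows essentially the same route as the paper: the same decomposition $\hat S_1^{\mathrm{Ch}}=\tfrac12(\hat S_1^{\mathrm{Ch}}-\hat S_2^{\mathrm{Ch}})+\tfrac12(\hat S_1^{\mathrm{Ch}}+\hat S_2^{\mathrm{Ch}})$, the same combination of (2.7) with (3.2)--(3.3) to eliminate $\delta^g\alpha_F$ (your pointwise identity is exactly the paper's (5.7) after expanding $|dF|^2=|(dF)_0^+|^2+\tfrac{1}{n-1}|\alpha_F|^2$), the same conformal transport to the Gauduchon metric, and the same Bochner/maximum-principle endgame. The only cosmetic difference is that you record the key identity pointwise before integrating, while the paper works under the integral sign.
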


\begin{proof}
Let $\hat{g}=e^{2f} g$ be the unique Gauduchon metric with unit volume in the conformal class $[g]$, then
\begin{equation}\label{f6}
\int_M \hat{S}_1^{\mathrm{Ch}}dV_{\hat{g}}=\frac{1}{2}\int_M (\hat{S}_1^{\mathrm{Ch}}-\hat{S}_2^{\mathrm{Ch}}) dV_{\hat{g}}
+\frac{1}{2}\int_M (\hat{S}_1^{\mathrm{Ch}}+\hat{S}_2^{\mathrm{Ch}}) dV_{\hat{g}}.
\end{equation}
From Gauduchon \cite{Gau3}, we have
\begin{equation}
\int_M (\hat{S}_1^{\mathrm{Ch}}-\hat{S}_2^{\mathrm{Ch}}) dV_{\hat{g}}=\frac{1}{2}\int_M |\alpha_{\hat{F}}|_{\hat{g}}^2 dV_{\hat{g}}.
\end{equation}

On the other hand, using (3.11) and (3.12), we have

\begin{equation}
\begin{aligned}
\int_M (\hat{S}_1^{\mathrm{Ch}}+\hat{S}_2^{\mathrm{Ch}}) dV_{\hat{g}}
&=\int_M  e^{-2f}\left(S_1^{\mathrm{Ch}}+S_2^{\mathrm{Ch}}+(n+1)\Delta_{g}^{\mathrm{Ch}}f \right)dV_{\hat{g}}\\
&=\int_M  e^{-2f}(S_1^{\mathrm{Ch}}+S_2^{\mathrm{Ch}})dV_{\hat{g}}.
\end{aligned}
\end{equation}

For a Hermitian manifold, $(dF)^-=0$ and $N^0=0$, then (3.2) and (3.3) yield the following
relations between Chern scalar curvatures and Riemannian scalar curvature:
\begin{equation}
S_1^{\mathrm{Ch}}=\frac{s}{2}+\frac{1}{4}|dF|^2-\frac{1}{2}\delta^g \alpha_F
\end{equation}
and
\begin{equation}
S_2^{\mathrm{Ch}}=\frac{s}{2}+\frac{1}{4}|dF|^2-\frac{1}{2}|\alpha_F|^2-\delta^g \alpha_F.
\end{equation}
Then combining (5.4) and (5.5), (5.3) implies
\begin{equation}
\int_M (\hat{S}_1^{\mathrm{Ch}}+\hat{S}_2^{\mathrm{Ch}}) dV_{\hat{g}}=\int_M  e^{-2f}\left(s+\frac{1}{2}|dF|^2-\frac{1}{2}|\alpha_F|^2-\frac{3}{2}\delta^g \alpha_F \right)dV_{\hat{g}}.
\end{equation}

Using the formula (2.7), the above identity can be rewritten as
\begin{equation}
\int_M (\hat{S}_1^{\mathrm{Ch}}+\hat{S}_2^{\mathrm{Ch}}) dV_{\hat{g}}=\int_M  e^{-2f}\left(\frac{s+3s_J}{4}+\frac{1}{2}|dF|^2+\frac{1}{4}|\alpha_F|^2\right)dV_{\hat{g}}.
\end{equation}
Thus from (5.1), (5.2) and (5.7), we have $\int_M \hat{S}_1^{\mathrm{Ch}}dV_{\hat{g}}\geq 0$ if $3s_J+s\geq 0$.

If $\int_M \hat{S}_1^{\mathrm{Ch}}dV_{\hat{g}}>0$, then, as in the proof of Theorem 4.1,
we show that the Kodaira dimension satisfies $\kappa(M, J)=-\infty$.

If $\int_M \hat{S}_1^{\mathrm{Ch}}dV_{\hat{g}}=0$, then the integrals (5.2) and (5.7) vanish, which imply
 $dF=\alpha_F=0$, and the Riemannian scalar curvature $s=s_J=0$. Equivalently, $(M,J,g)$ is a K\"{a}hler manifold with zero Riemannian scalar curvature. Then, as in the proof of Theorem 4.1, the Kodaira dimension
$\kappa(M, J)=0$, and $(M,J,g)$ is a K\"{a}hler Calabi-Yau manifold.
\end{proof}

\begin{cor}
Let $(M,J)$ be a compact Moishezon manifold of complex dimension $n$. If $(M,J)$ admits a Hermitian metric $g$ such that its corresponding Riemannian scalar curvature $s$ and $J$-scalar curvature $s_J$ satisfing $3s_J+s>0$, then  $(M,J)$ is uniruled.
\end{cor}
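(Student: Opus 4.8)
The plan is to upgrade the non-strict conclusion of Theorem 5.1 to a \emph{strict} positivity of the total Chern scalar curvature under the strict hypothesis $3s_J+s>0$, and then to feed this into the birational characterization of uniruled Moishezon manifolds due to Chiose-R\u{a}sdeaconu-\c{S}uvaina \cite{CRS}.

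First I would reuse verbatim the curvature computation from the proof of Theorem 5.1. Let $\hat{g}=e^{2f}g$ be the unit-volume Gauduchon metric in the conformal class $[g]$. Combining the splitting (5.1) with the identities (5.2) and (5.7) gives
\begin{equation*}
\int_M \hat{S}_1^{\mathrm{Ch}}\,dV_{\hat{g}}
=\frac{1}{4}\int_M |\alpha_{\hat{F}}|_{\hat{g}}^2\,dV_{\hat{g}}
+\frac{1}{2}\int_M e^{-2f}\left(\frac{s+3s_J}{4}+\frac{1}{2}|dF|^2+\frac{1}{4}|\alpha_F|^2\right)dV_{\hat{g}}.
\end{equation*}
Every term on the right is nonnegative, and the pointwise hypothesis $s+3s_J>0$ forces the integrand $\tfrac14(s+3s_J)$ to be strictly positive on the compact manifold $M$; hence the integral is strictly positive, $\int_M \hat{S}_1^{\mathrm{Ch}}\,dV_{\hat{g}}>0$. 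In other words, the Gauduchon metric $\hat{g}$ carries strictly positive total Chern scalar curvature.

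It then remains to convert positive total Chern scalar curvature into uniruledness, and this is precisely where the Moishezon hypothesis is used: by the theorem of Chiose-R\u{a}sdeaconu-\c{S}uvaina \cite{CRS}, a compact Moishezon manifold is uniruled if and only if it admits a Gauduchon metric of positive total Chern scalar curvature. Since $\hat{g}$ is such a metric, $(M,J)$ is uniruled, which finishes the argument.

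I expect the genuine weight of the proof to rest entirely on this last appeal, not on the curvature bookkeeping. The displayed identity is a direct specialization of the proof of Theorem 5.1, so the only new point on the analytic side is observing that the strict inequality survives integration — which it does because $M$ is compact and all remaining terms are nonnegative. The substantive input is the deep result of \cite{CRS}. I would also remark that strictness simultaneously excludes the Calabi-Yau alternative of Theorem 5.1 (in that case $s=s_J\equiv 0$, contradicting $3s_J+s>0$), so that in fact $\kappa(M,J)=-\infty$, consistent with uniruledness; but uniruledness itself is read off directly from \cite{CRS} rather than deduced from the Kodaira dimension.
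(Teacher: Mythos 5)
Your proposal is correct and follows essentially the same route as the paper: the paper's proof likewise notes that the strict hypothesis $3s_J+s>0$, fed through the integral identities from the proof of Theorem 5.1, yields strictly positive total Chern scalar curvature for the unit-volume Gauduchon metric, and then invokes the characterization theorem of Chiose--R\u{a}sdeaconu--\c{S}uvaina. Your displayed identity and the observation about where the real weight of the argument lies are both accurate.
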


\begin{proof}
If $3s_J+s>0$, then it follows from the proof of Theorem 5.1 that for the unique Gauduchon metric $\hat{g}$ with unit volume in the conformal class $[g]$, the corresponding total Chern scalar curvature is positive. The result is then a consequence of the characterization theorem (Theorem D) by Chiose-R\u{a}sdeaconu-\c{S}uvaina in \cite{CRS}.
\end{proof}

Let us recall the definition of the Riemannian holomorphic sectional curvature. At a point $p\in M$, for any nonzero $(1,0)$-type tangent vector $\xi=\sum_{i=1}^{n}\xi^i u_i\in T_p^{1,0}M$, the Riemannian holomorphic sectional curvature $H$ (with respect to the Levi-Civita connection $\nabla$) in the direction of $\xi$ is defined as
\begin{equation*}
H_p(\xi)=\frac{R(\bar{\xi},\xi,\xi,\bar{\xi})}{|\xi|^4}=\frac{\langle R(\xi,\bar{\xi})\xi,\bar{\xi}\rangle}{|\xi|^4}.
\end{equation*}
The Riemannian holomorphic sectional curvature $H$ is called nonnegative if $H_p(\xi)\geq 0$ for any pair $(p,\xi)$.
Set $S_p(M)=\{\xi\in T_p^{1,0}M |~|\xi|=1\}\cong S^{2n-1}$, $dV_{\xi}$ is the standard volume form for the sphere with radius 1.  Then we have
\begin{cor}
Let $(M,J,g)$ be a compact Hermitian manifold of complex dimension $n$. If its Riemannian holomorphic sectional curvature is nonnegative, then either

(1) $\kappa(M, J)=-\infty$; or

(2) $\kappa(M, J)=0$, and $(M,J,g)$ is a K\"{a}hler flat manifold.

\end{cor}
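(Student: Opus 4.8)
The plan is to compute the pointwise average of the Riemannian holomorphic sectional curvature over the unit sphere $S_p(M)$, show that it is a positive multiple of $s+3s_J$, and thereby reduce the corollary to Theorem 5.1; the equality case then upgrades ``Calabi-Yau'' to ``flat''.

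First I would expand $H_p(\xi)=R(\bar{\xi},\xi,\xi,\bar{\xi})$ for $\xi=\sum_j \xi^j u_j$ and integrate over $S_p(M)\cong S^{2n-1}$. Writing $\bar{\xi}=\sum_i \bar{\xi}^i u_{\bar i}$ and applying the standard moment identity on the sphere,
\[
\int_{S_p(M)}\xi^j\xi^k\bar{\xi}^i\bar{\xi}^l\,dV_{\xi}
=\frac{\mathrm{Vol}(S^{2n-1})}{n(n+1)}\bigl(\delta_{ij}\delta_{kl}+\delta_{il}\delta_{kj}\bigr),
\]
the quartic expression collapses to exactly two contractions:
\[
\int_{S_p(M)}H_p(\xi)\,dV_{\xi}
=\frac{\mathrm{Vol}(S^{2n-1})}{n(n+1)}
\left[\sum_{i,k}R(u_{\bar i},u_i,u_k,u_{\bar k})+\sum_{i,j}R(u_{\bar i},u_j,u_i,u_{\bar j})\right].
\]
By (2.5) the first sum equals $\tfrac{1}{2}s_J$, and rearranging the second line of (2.6) gives $\sum_{i,j}R(u_{\bar i},u_j,u_i,u_{\bar j})=\tfrac{1}{4}(s+s_J)$. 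Hence
\[
\int_{S_p(M)}H_p(\xi)\,dV_{\xi}=\frac{\mathrm{Vol}(S^{2n-1})}{4n(n+1)}\,(s+3s_J).
\]
Since the hypothesis $H\geq 0$ forces the left-hand side to be nonnegative at every point, we obtain $3s_J+s\geq 0$ pointwise. Theorem 5.1 then applies and yields either $\kappa(M,J)=-\infty$, or $\kappa(M,J)=0$ with $(M,J,g)$ a K\"{a}hler Calabi-Yau manifold.

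It remains to promote the Calabi-Yau conclusion in case (2) to flatness. In that case the proof of Theorem 5.1 already establishes that $g$ is K\"{a}hler with $s=s_J=0$, so the averaging identity above gives $\int_{S_p(M)}H_p(\xi)\,dV_{\xi}=0$ at every point $p$. Combined with the pointwise inequality $H_p(\xi)\geq 0$, this forces $H_p\equiv 0$ for all $p$, i.e.\ the Riemannian holomorphic sectional curvature vanishes identically. Because on a K\"{a}hler manifold the full curvature tensor is recovered from the holomorphic sectional curvature by polarization, we conclude $R\equiv 0$, so $(M,J,g)$ is K\"{a}hler flat.

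The main obstacle is the bookkeeping in the averaging step: one must verify the sphere moment identity and then carefully apply the Riemannian curvature symmetries together with (2.5) and (2.6) to land precisely on the combination $s+3s_J$ demanded by Theorem 5.1. Once this identity is in place, the reduction and the final flatness argument are immediate, the latter using only the classical fact that a K\"{a}hler metric with vanishing holomorphic sectional curvature is flat.
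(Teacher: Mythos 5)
Your proof is correct and follows essentially the same route as the paper: Berger's averaging trick over $S_p(M)$ identifies the pointwise mean of $H$ with $\frac{\mathrm{Vol}(S^{2n-1})}{4n(n+1)}(s+3s_J)$ via (2.5) and (2.6), reducing the statement to Theorem 5.1, and the equality case is upgraded to flatness exactly as in the paper, using that a K\"ahler metric with identically vanishing holomorphic sectional curvature is flat. The only blemish is a typo in your sphere moment identity: the second pairing should match each holomorphic index with an antiholomorphic one, i.e.\ $\delta_{jl}\delta_{ki}$ rather than $\delta_{il}\delta_{kj}$, but the contraction you actually carry out in the next display is the correct one.
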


\begin{proof}
We need the following well-known Berger's averaging trick, which is used to show that the sign of Riemannian holomorphic sectional curvature
determines the sign of a mixed scalar curvature.
\begin{equation}
\begin{aligned}
\int_{S_p(M)} H_p(\xi) dV_{\xi}
&=\int_{S_p(M)} \sum_{i,j,k,l=1}^{n}R(u_{\bar{i}},u_j,u_k,u_{\bar{l}})\bar{\xi^{i}}\xi^{j}\xi^{k}\bar{\xi^{l}}dV_{\xi}\\
&=\sum_{i,j,k,l=1}^{n}R(u_{\bar{i}},u_j,u_k,u_{\bar{l}}) \int_{S_p(M)} \bar{\xi^{i}}\xi^{j}\xi^{k}\bar{\xi^{l}}dV_{\xi}\\
&=\sum_{i,j,k,l=1}^{n}R(u_{\bar{i}},u_j,u_k,u_{\bar{l}})\frac{\delta_{ij}\delta_{kl}+\delta_{ik}\delta_{jl}}{n(n+1)} \mathrm{Vol}(S^{2n-1})\\
&=\frac{\mathrm{Vol}(S^{2n-1})}{n(n+1)}\sum_{i,j=1}^{n}\bigl(R(u_{\bar{i}},u_i,u_j,u_{\bar{j}})+R(u_{\bar{i}},u_j,u_i,u_{\bar{j}})\bigr)\\
&=\frac{\mathrm{Vol}(S^{2n-1})}{4n(n+1)}(s+3s_J).
\end{aligned}
\end{equation}
If the Riemannian holomorphic sectional curvature is nonnegative, then the above equality implies  $s+3s_J\geq 0$.  From Theorem 5.1, we obtain that
(1) $\kappa(M, J)=-\infty$; or (2) $\kappa(M, J)=0$, and $(M,J,g)$ is a K\"{a}hler Calabi-Yau manifold. Furthermore, the holomorphic sectional curvature is zero in the second case. It is well-known that the holomorphic sectional curvature determines the sectional curvature
on a K\"{a}hler manifold. Therefore $(M,J,g)$ is a K\"{a}hler flat manifold in the second case. This completes the proof.

\end{proof}

\begin{rem}
Corollary 5.3 can be compared with the main result of Li \cite[Theorem 1.1]{Li1},  where he considered nonnegative holomorphic sectional curvature with respect to the Chern connection. Our condition is stronger than Li's. Indeed, for a Hermitian manifold, the holomorphic sectional curvature of the Levi-Civita connection is bounded above by that of the Chern connection \cite[Corollary 3.1]{Yu} \cite[Theorem 7]{YZ}. Nonnegative Riemannian holomorphic sectional curvature is a special case of the notion of mixed curvature, which was recently introduced by Chu-Lee-Zhu \cite[Definition 2.1]{CLZ}. It would be interesting to study the structure of compact Hermitian manifolds with nonnegative mixed curvature with respect to the Levi-Civita connection.
\end{rem}
\begin{rem}
On the other hand, in his  ``Problem section'', Yau~\cite[Problem 47]{Yau3} proposed a well-known conjecture
that a compact K\"{a}hler manifold with positive holomorphic sectional curvature is a projective and rationally connected manifold.
By studying the maximally rationally connected fibration, Heier-Wong \cite{HeW2} obtained that a projective K\"{a}hler manifold
with quasi-positive holomorphic sectional curvature is rationally connected. Yau's conjecture was  affirmatively confirmed by Yang \cite{Yang1}. Since then, there are many important generalizations and structure theorems for  compact K\"{a}hler manifold with
some nonnegative curvatures. We refer interested readers to \cite{CLT, CLZ, Li2, Mat1, Mat2, Ni, NZ1, NZ2, Tang, Zhang1, Zhang2} and references therein.
\end{rem}
\begin{rem}
We study (almost) complex geometry, particularly  the Kodaira dimension, from the perspective of Riemannian geometry. The results obtained in this work may be applied to construct certain examples, as suggested in Problem 5.4 of Chen-Zhang \cite{ChZ2}.
\end{rem}

\section{Appendix: Scalar curvatures on twistor spaces and some consequences}
Let $(N,g_N)$ be a compact oriented 4-dimensional Riemannian manifold with metric $g_N$. The Hodge star operator
gives a map $\ast_{g_N}: \wedge^2\rightarrow \wedge^2$ with $\ast_{g_N}^2=1$. Accordingly, its eigenvalues are $\pm 1$ and
the bundle of 2-forms splits $\wedge^2=\wedge^+\oplus \wedge^-$  into  eigenspaces.  $\wedge^+$ (resp. $\wedge^-$)
is called the bundle of self-dual (resp. anti-self-dual) 2-forms.

Set $\mathbf{Z}=\{(x,J_{x})|x\in N, J_{x}$ is an
orientation preserving orthogonal complex structure of the vector space $T_{x}N\}$.
$\mathbf{Z}$ is called the twistor space of $(N,g_N)$. $\mathbf{Z}$ has the following representations:
\begin{equation}
\mathbf{Z}=S(\wedge^{+})=P\times_{SO(4)}SO(4)/U(2),
\end{equation}
where $S(\wedge^{+})$ is the sphere bundle associated to the bundle of self-dual 2-forms $\wedge^{+}$, and  $P$ is the $SO(4)$-principal
bundle of oriented orthonormal frames over $(N,g_N)$. The Levi-Civita connection of $g_N$ gives rise to a splitting of the tangent bundle
$T\mathbf{Z}$ into horizontal and vertical components. Atiyah-Hitchin-Singer \cite{AHS} defined an important almost complex structure $\mathbb{J}_+$ on $\mathbf{Z}$, and they proved that $\mathbb{J}_+$ is integrable if and only if $(N,g_N)$ is anti-self-dual. On the other hand, Eells-Salamon \cite{ES} introduced another almost complex structure $\mathbb{J}_-$ which, by contrast with \textbf{$\mathbb{J}_+$}, is never integrable.
The twistor space $\mathbf{Z}$ also admits a  natural 1-parameter family of Riemannian metrics $g_t$ such that the projection
$\pi: \mathbf{Z}\rightarrow N$ is a Riemannian subersion with totally geodesic fibers. Furthermore, the metrics $g_t$ are compatible with almost complex structures $\mathbb{J}_+$ and $\mathbb{J}_-$. In the following, we employ the method introduced by
Jensen-Rigoli \cite{JR} \cite{FZ} to study some special geometric structures on the twistor space $(\mathbf{Z}, \mathbb{J}_{\pm})$.

On the $SO(4)$-principal bundle $P$ of oriented orthonormal frames over $(N,g_N)$, the $\mathbb{R}^4$-valued canonical form is denoted by $\theta=(\theta^a)$. The $\mathfrak{so}(4)$-valued Levi-Civita connection forms and curvature forms are denoted by $\omega=(\omega_b^{a})$ and $\Omega=(\Omega_b^{a})$, respectively. The structure equations of $(N,g_N)$ are
\begin{eqnarray}
&&d\theta^a=-\sum_{b=1}^{4}\omega_{b}^{a}\wedge\theta^b,\\
&&d\omega_b^{a}=-\sum_{c=1}^{4}\omega_c^{a}\wedge\omega_b^{c}+\Omega_b^{a},
\end{eqnarray}
where $\Omega_b^{a}=\frac{1}{2}\sum_{c,d=1}^{4}R_{abcd}\theta^c\wedge\theta^d$, $R_{abcd}$ are functions on $P$ defining the Riemannian curvature tensor, $a,b,c,d=1,2,3,4$.

We define complex-valued 1-forms on $P$ by
\begin{equation}
\varphi^1=\theta^1+\sqrt{-1}\theta^2,~\varphi^2=\theta^3+\sqrt{-1}\theta^4,~\varphi^3
=\theta^5+\sqrt{-1}\theta^6,
\end{equation}
where $\theta^5=\frac{1}{2}(\omega_3^1-\omega_4^2),\theta^6=\frac{1}{2}(\omega_4^1+\omega_3^2).$

As in \cite{JR}, let $U\subset \mathbf{Z}$ be an open subset on which there is a local section $\mathfrak{u}:U\rightarrow P$. Then an orthonormal coframe for $g_t$ on $U$ is given by
\begin{equation}
\mathfrak{u}^{\star}(\theta^a),~2t\mathfrak{u}^{\star}(\theta^5),~2t\mathfrak{u}^{\star}(\theta^6).
\end{equation}
The local complex-valued 1-forms $\{\mathfrak{u}^{\star}\varphi^1$,  $\mathfrak{u}^{\star}\varphi^2$,  $\mathfrak{u}^{\star}\varphi^3\}$ are the basis of (1,0)-forms of the almost complex structure $\mathbb{J}_+$, and the local complex-valued 1-forms $\{\mathfrak{u}^{\star}\varphi^1$,  $\mathfrak{u}^{\star}\varphi^2$,  $\mathfrak{u}^{\star}\overline{\varphi^3}\}$ are the basis of (1,0)-forms of the almost complex structure $\mathbb{J}_-$.

For the almost Hermitian twistor spaces $(\mathbf{Z},\mathbb{J}_{\pm},g_t)$, the associated fundamental 2-forms, denoted
by $F_{\pm}(t)$, are
\begin{equation}
F_{\pm}(t)=\frac{\sqrt{-1}}{2}(\mathfrak{u}^{\star}\varphi^1\wedge \mathfrak{u}^{\star}\overline{\varphi^1}+
\mathfrak{u}^{\star}\varphi^2\wedge \mathfrak{u}^{\star}\overline{\varphi^2} \pm 4t^2\mathfrak{u}^{\star}\varphi^3\wedge \mathfrak{u}^{\star}\overline{\varphi^3}),
\end{equation}
where $\mathfrak{u}$ is any local section of $\pi_1: P\rightarrow \mathbf{Z}$. Hereafter, for convenience, we always omit the pullback mapping $\mathfrak{u}^{\star}$.

With respect to the orthonormal coframe (6.5), the Levi-Civita connection forms for $(\mathbf{Z}, g_t)$ are given by \cite[(4.19)]{JR}
\begin{eqnarray}
&&\theta_b^{a}=\omega_{b}^{a}+t^2(R_{13ba}+R_{42ba})\theta^5+t^2(R_{14ba}+R_{23ba})\theta^6,\\
&&\theta_b^{5}=\frac{t}{2}(R_{13ba}+R_{42ba})\theta^a=-\theta_5^{b},\\
&&\theta_b^{6}=\frac{t}{2}(R_{14ba}+R_{23ba})\theta^a=-\theta_6^{b},\\
&&\theta_6^{5}=\omega_2^1+\omega_4^3=-\theta_5^{6}.
\end{eqnarray}
Using the structure equations, one can get all components of the Riemannian curvature tensor of $g_t$ \cite[(4.20)]{JR}. In particular, if $(N,g_N)$ is anti-self-dual and Einstein, then the Riemannian scalar curvature $s(t)$ of twistor space $(\mathbf{Z}, g_t)$ is
\begin{equation}
s(t)=s_N+\frac{2}{t^2}-\frac{s_N^2}{72}t^2,
\end{equation}
where $s_N$ is the Riemannian scalar curvature of $N$. Meanwhile, two $\mathbb{J}_{\pm}$-scalar curvatures are
\begin{equation}
s_{\mathbb{J}_+}(t)=s(t)=s_N+\frac{2}{t^2}-\frac{s_N^2}{72}t^2
\end{equation}
and
\begin{equation}
s_{\mathbb{J}_-}(t)=-\frac{1}{3}s_N+\frac{2}{t^2}+\frac{s_N^2}{24}t^2.
\end{equation}

It is also natural to consider the geometry of the Gauduchon connection \cite{Gau4} (specifically the Chern connection) on twistor spaces $(\mathbf{Z},\mathbb{J}_{\pm},g_t)$. Such problems were first studied by Davidov-Grantcharov-Mu\v{s}karov \cite{DGM}. In contrast, our discussion focuses on the scalar curvatures and complex geometry properties of these twistor spaces.

For the fundamental 2-forms $F_+(t)$  and $F_-(t)$ on the twistor space $\mathbf{Z}$, if the base manifold $(N,g_N)$ is anti-self-dual, then  $F_{\pm}(t)$ satisfy \cite{Mic}\cite{Mus}
\begin{equation}
F_{\pm}(t)\wedge dF_{\pm}(t)=0,
\end{equation}
i.e., the corresponding Lee forms $\alpha_{F_{\pm}(t)}=0$. According to a remark by Gauduchon \cite[(2.7.6)]{Gau4}, to obtain the Chern scalar curvatures of $(\mathbf{Z},\mathbb{J}_{\pm},g_t)$, it suffices to perform the calculation using the Lichnerowicz connection. From now on, we assume $(N, g_N)$ is anti-self-dual, then the corresponding Riemannian curvature components $R_{abcd}$ satisfy
\begin{equation}
R_{1312}+R_{4212}+R_{1334}+R_{4234}=0 \quad \text{and} \quad R_{1412}+R_{2312}+R_{1434}+R_{2334}=0.
\end{equation}

Using Kobayashi's formalism \cite[Section 5]{Kob2}, the connection form of the Lichnerowicz connection is given by its $u(3)$-component of the Levi-Civita connection. In the following, we will explicitly describe the connection forms associated with the structures $\mathbb{J}_+$
and $\mathbb{J}_-$ in the complex setting, respectively.

For Hermitian twistor space $(\mathbf{Z},\mathbb{J}_+,g_t)$, from (6.7)-(6.10), locally, the Lichnerowicz connection forms $\phi_{+j}^{i}$ are given by
\begin{eqnarray}
&&\phi_{+1}^{1}=-\sqrt{-1}\theta_2^1,~~\phi_{+2}^{1}=\frac{1}{2}\left(\theta_3^1+\theta_4^2+\sqrt{-1}(\theta_3^2-\theta_4^1)\right),\\
&&\phi_{+2}^{2}=-\sqrt{-1}\theta_4^3,~~\phi_{+3}^{2}=\frac{1}{2}\left(\theta_5^3+\theta_6^4+\sqrt{-1}(\theta_5^4-\theta_6^3)\right),\\
&&\phi_{+3}^{3}=-\sqrt{-1}\theta_6^5,~~\phi_{+3}^{1}=\frac{1}{2}\left(\theta_5^1+\theta_6^2+\sqrt{-1}(\theta_5^2-\theta_6^1)\right).
\end{eqnarray}
Similarly, for almost Hermitian twistor space $(\mathbf{Z},\mathbb{J}_-,g_t)$, locally, the Lichnerowicz connection forms $\phi_{-j}^{i}$ are given by
\begin{eqnarray}
&&\phi_{-1}^{1}=-\sqrt{-1}\theta_2^1,~~\phi_{-2}^{1}=\frac{1}{2}\left(\theta_3^1+\theta_4^2+\sqrt{-1}(\theta_3^2-\theta_4^1)\right),\\
&&\phi_{-2}^{2}=-\sqrt{-1}\theta_4^3,~~\phi_{-3}^{2}=\frac{1}{2}\left(\theta_5^3-\theta_6^4+\sqrt{-1}(\theta_5^4+\theta_6^3)\right),\\
&&\phi_{-3}^{3}=+\sqrt{-1}\theta_6^5,~~\phi_{-3}^{1}=\frac{1}{2}\left(\theta_5^1-\theta_6^2+\sqrt{-1}(\theta_5^2+\theta_6^1)\right).
\end{eqnarray}
Other components are determined by $\phi_{\pm j}^i+\overline{\phi_{\pm i}^j}=0$, where $i,j=1,2,3$. Using the structure equations, one can derive all curvature information for the Lichnerowicz connection of $(\mathbf{Z},\mathbb{J}_{\pm},g_t)$. The following are some key observations.

$\mathbf{(I)}$ With respect to $(\mathbf{Z},\mathbb{J}_+)$, it is well-known that $\mathbb{J}_+$ is integrable \cite{AHS}, $(\mathbf{Z},\mathbb{J}_+)$ is uniruled, and hence the Kodaira dimension $\kappa(\mathbf{Z},\mathbb{J}_+)=-\infty$. Furthermore, the first Chern class $c_1(\mathbf{Z},\mathbb{J}_+)$ is represented by the following Chern-Ricci form
\begin{equation}
\begin{aligned}
\rho_+(t)
&=\sqrt{-1}d(\phi_{+1}^{1}+\phi_{+2}^{2}+\phi_{+3}^{3})\\
&=d(\theta_2^1+\theta_4^3+\theta_6^5)=2d(\omega_2^1+\omega_4^3)\\
&=2(\omega_3^1-\omega_4^2)\wedge(\omega_4^1+\omega_3^2)+2(\Omega_2^1+\Omega_4^3)\\
&=\frac{2}{t^2}(2t\theta^5)\wedge(2t\theta^6)+2(\Omega_2^1+\Omega_4^3).
\end{aligned}
\end{equation}
In particular, if $(N,g_N)$ is anti-self-dual and Einstein, then
\begin{equation}
\Omega_2^1+\Omega_4^3=\frac{\sqrt{-1}}{24}s_N (\varphi^1\wedge\overline{\varphi^1}+\varphi^2\wedge\overline{\varphi^2}).
\end{equation}
Thus the corresponding Chern scalar curvature $s_1^{\mathrm{Ch}}(\mathbb{J}_+, t)$  is given by
\begin{equation}
s_1^{\mathrm{Ch}}(\mathbb{J}_+, t)=\langle \rho_+(t), F_+(t)\rangle=\frac{s_N}{3}+\frac{2}{t^2}.
\end{equation}
Now, Combining equation $(6.11)$, we obtain:

If $s_N>0$, then the twistor space $(\mathbf{Z},\mathbb{J}_+)$ admits balanced metrics with positive, negative and zero Riemannian scalar
curvature, respectively. However their Chern scalar curvatures are all positive.

If $s_N=0$, then the twistor space $(\mathbf{Z},\mathbb{J}_+)$ admits a natural 1-parameter family of balanced metrics with positive Riemannian scalar curvature and positive Chern scalar curvature.

If $s_N<0$, then the twistor space $(\mathbf{Z},\mathbb{J}_+)$ admits balanced metrics with positive, negative and zero Riemannian scalar
curvature, respectively. Meanwhile, the same conclusions also hold for Chern scalar curvature, i.e., $(\mathbf{Z},\mathbb{J}_+)$ admits balanced metrics with positive, negative and zero Chern scalar curvature, respectively. From Yang \cite[Theorem 1.1]{Yang2}, then the canonical line bundle and the anti-canonical line bundle of $(\mathbf{Z},\mathbb{J}_+)$ are not pseudo-effective.

$\mathbf{(II)}$ With respect to $(\mathbf{Z},\mathbb{J}_-)$, it is well-known that $\mathbb{J}_-$ is not integrable \cite{ES}, and that its first Chern class vanishes, i.e., $c_1(\mathbf{Z},\mathbb{J}_-)=0$, as also follows from the following Chern-Ricci form
\begin{equation}
\rho_-(t)=\sqrt{-1}d(\phi_{-1}^{1}+\phi_{-2}^{2}+\phi_{-3}^{3})=d(\theta_2^1+\theta_4^3-\theta_6^5)=0.
\end{equation}
Meanwhile, the corresponding Chern scalar curvature
\begin{equation}
s_1^{\mathrm{Ch}}(\mathbb{J}_-, t)=\langle \rho_-(t), F_-(t)\rangle=0.
\end{equation}

Let $K_{\mathbb{J}_-}=\Lambda_{\mathbb{J}_-}^{3,0}\mathbf{Z}$ be the canonical line bundle of almost complex manifold $(\mathbf{Z},\mathbb{J}_-)$. We claim that the Kodaira dimension $\kappa(\mathbf{Z},\mathbb{J}_-)=0$.

As in Jensen-Rigoli \cite{JR}, let $\mathfrak{u}$ be a local section of $\pi_1: P\rightarrow \mathbf{Z}$, then the $(3,0)$-form $\mathfrak{u}^{\star}\varphi^1\wedge\mathfrak{u}^{\star}\varphi^2\wedge\mathfrak{u}^{\star}\overline{\varphi^3}$ is globally defined, i.e., it is independent of the choice of the local section $\mathfrak{u}$. Therefore $\mathfrak{u}^{\star}\varphi^1\wedge\mathfrak{u}^{\star}\varphi^2\wedge\mathfrak{u}^{\star}\overline{\varphi^3}$ is a nowhere-vanishing section of  $K_{\mathbb{J}_-}$. For convenience, we always omit the pullback mapping $\mathfrak{u}^{\star}$.

Using the structure equations, a direct calculation shows that
\begin{equation}
\begin{aligned}
d(\varphi^1 \wedge \varphi^2 \wedge \overline{\varphi^3})
&= -\varphi^2 \wedge \overline{\varphi^2} \wedge \overline{\varphi^3} \wedge \varphi^3
   -\varphi^1 \wedge \overline{\varphi^1} \wedge \overline{\varphi^3} \wedge \varphi^3 \\
&\quad + \varphi^1 \wedge \varphi^2 \wedge \frac{1}{2}
   \left( \Omega_3^1 - \Omega_4^2 - \sqrt{-1} (\Omega_3^2 + \Omega_4^1) \right) \\
&= -(\varphi^1 \wedge \overline{\varphi^1}+\varphi^2 \wedge \overline{\varphi^2})\wedge \overline{\varphi^3} \wedge \varphi^3
-\frac{s_N}{24}\varphi^1 \wedge \overline{\varphi^1}\wedge \varphi^2 \wedge \overline{\varphi^2}.
\end{aligned}
\end{equation}
Thus
\begin{equation}
\bar{\partial}_{\mathbb{J}_-}(\varphi^1 \wedge \varphi^2 \wedge \overline{\varphi^3})=0.
\end{equation}
For any pseudoholomorphic section $\sigma\in H^0(\mathbf{Z}, K_{\mathbb{J}_-})$, then $\sigma=f \varphi^1 \wedge \varphi^2 \wedge \overline{\varphi^3}$, where $f$ is a smooth function on $\mathbf{Z}$. From $\bar{\partial}_{\mathbb{J}_-} \sigma=0$, we get $\bar{\partial}_{\mathbb{J}_-} f=0$. By the compactness of $\mathbf{Z}$ and the maximum principle, we deduce that $f$ must be a constant. Therefore $P_1(\mathbf{Z},\mathbb{J}_-)=1$ with $\varphi^1 \wedge \varphi^2 \wedge \overline{\varphi^3}$ being a generator. Similarly, we obtain $P_m(\mathbf{Z},\mathbb{J}_-)=1$ with $(\varphi^1 \wedge \varphi^2 \wedge \overline{\varphi^3})^{\otimes m}$ being a generator of $H^0(\mathbf{Z}, K_{\mathbb{J}_-}^{\otimes m})$. Thus the Kodaira dimension $\kappa(\mathbf{Z},\mathbb{J}_-)=0$.

\bigskip
\footnotesize
\noindent\textit{Acknowledgments.}
The author would like to thank Professors Jixiang Fu, Xiaoxiang Jiao and Jiagui Peng for their helpful suggestions and encouragements. He is also grateful to Wubin Zhou for inspiring discussions. This work was partially supported by NSFC 11501505.


\end{document}